\documentclass[12pt,reqno]{amsart}
\usepackage{a4wide}
\usepackage[T1]{fontenc}
\usepackage{amssymb,amsmath,mathtools}
\usepackage{mathrsfs} 
\usepackage{enumitem} 
\usepackage[dvipsnames]{xcolor}
\usepackage{graphicx}
\usepackage{comment} 

\usepackage{hyperref}
\hypersetup{colorlinks=true, linkcolor=MidnightBlue, citecolor=teal, urlcolor=cyan}
\usepackage{amsrefs} 
\usepackage[nameinlink,capitalise]{cleveref}

\setlist[enumerate]{leftmargin=*, widest=iii}
\setlist[enumerate,1]{label=\rm{(\roman*)}, ref=\roman*}
\setlist[itemize]{leftmargin=*, widest=iii}

\usepackage[colorinlistoftodos,textsize=tiny]{todonotes}

\usepackage{bclogo}

\newtheorem{theorem}{Theorem}[section]
\newtheorem{lemma}[theorem]{Lemma}
\AddToHook{env/lemma/begin}{\crefalias{theorem}{lemma}}
\newtheorem{corollary}[theorem]{Corollary}
\AddToHook{env/corollary/begin}{\crefalias{theorem}{corollary}}
\newtheorem{proposition}[theorem]{Proposition}
\AddToHook{env/proposition/begin}{\crefalias{theorem}{proposition}}
\newtheorem{fact}[theorem]{Fact}
\AddToHook{env/fact/begin}{\crefalias{theorem}{fact}}
\newtheorem{claim}[theorem]{Claim}
\AddToHook{env/claim/begin}{\crefalias{theorem}{claim}}

\AddToHook{env/problem/begin}{\crefalias{theorem}{problem}}

\newcounter{maintheorem}

\newtheorem{mainth}[maintheorem]{Theorem}
\AddToHook{env/mainth/begin}{\crefalias{maintheorem}{mainth}}

\theoremstyle{remark}
\newtheorem{remark}[theorem]{Remark}
\AddToHook{env/remark/begin}{\crefalias{theorem}{remark}}

\theoremstyle{definition}
\newtheorem{definition}[theorem]{Definition}
\AddToHook{env/definition/begin}{\crefalias{theorem}{definition}}

\AddToHook{env/example/begin}{\crefalias{theorem}{example}}

\numberwithin{equation}{section}
\makeatother

\newcommand{\R}{\mathbb{R}}
\newcommand{\ZZ}{\mathbb{Z}}
\newcommand{\N}{\mathbb{N}}
\newcommand{\e}{\varepsilon}
\renewcommand{\theta}{\vartheta}
\renewcommand{\rho}{\varrho}

\newcommand{\inte}{\mathrm{int}}
\newcommand{\cal}[1]{\mathcal{#1}}

\newcommand{\n}{\left\Vert\cdot\right\Vert}
\newcommand{\nn}[1]{{\left\vert\kern-0.25ex\left\vert\kern-0.25ex\left\vert #1 
\right\vert\kern-0.25ex\right\vert\kern-0.25ex\right\vert}}

\renewcommand{\leq}{\leqslant}
\renewcommand{\geq}{\geqslant}

\renewcommand{\epsilon}{\varepsilon}
\newcommand{\spn}{{\rm span}}

\DeclareMathOperator{\dens}{dens}
\DeclareMathOperator{\dist}{dist}

\newcommand{\X}{\mathcal{X}}
\newcommand{\Y}{\mathcal{Y}}
\newcommand{\Z}{\mathcal{Z}}
\newcommand{\C}{\mathcal{C}}
\newcommand{\F}{\mathcal{F}}
\newcommand{\B}{\mathcal{B}}
\renewcommand\qedsymbol{$\blacksquare$} 

\title[Tilings and coverings by balls in \texorpdfstring{$\ell_1$}{ell1}]{Tilings and coverings by balls in \texorpdfstring{$\ell_1$}{ell1}}

\author[C.A.~De~Bernardi]{Carlo Alberto De Bernardi}
\address[C.A.~De~Bernardi]{Dipartimento di Matematica per le Scienze economiche, finanziarie ed attuariali, Universit\`a Cattolica del Sacro Cuore, 20123 Milano, Italy\newline
\href{https://orcid.org/0000-0002-9654-1324}{ORCID: \texttt{0000-0002-9654-1324}}}
\email{carloalberto.debernardi@unicatt.it, carloalberto.debernardi@gmail.com}

\author[T.~Russo]{Tommaso Russo}
\address[T.~Russo]{Universit\"{a}t Innsbruck, Department of Mathematics, Technikerstra\ss e 13, 6020 Innsbruck, Austria \newline
\href{https://orcid.org/0000-0003-3940-2771}{ORCID: \texttt{0000-0003-3940-2771}}}
\email{tommaso.russo@uibk.ac.at, tommaso.russo.math@protonmail.com}

\author[\c S.~Sezgek]{\c Seyda~Sezgek}
\address[\c S.~Sezgek]{Department of Mathematics, Mersin University, Mersin, Turkey \newline
\href{https://orcid.org/0000-0001-9035-3114}{ORCID: \texttt{0000-0001-9035-3114}}}
\email{seydasezgek@mersin.edu.tr}

\author[J.~Somaglia]{Jacopo Somaglia}
\address[J.~Somaglia]{Politecnico di Milano, Dipartimento di Matematica, Piazza Leonardo da Vinci 32, 20133 Milano, Italy \newline
\href{https://orcid.org/0000-0003-0320-3025}{ORCID: \texttt{0000-0003-0320-3025}}}
\email{jacopo.somaglia@polimi.it}

\subjclass[2020]{Primary 46B04, 46B20, 51M20; Secondary 05B45, 52A05}
\keywords{Tiling by balls, Covering, Star-finite covering}
\thanks{The research of C.A.~De Bernardi has been partially supported by the GNAMPA (INdAM -- Istituto Nazionale di Alta Matematica) and by the MICINN project PID2020-112491GB-I00 (Spain). The research of T.~Russo and J.~Somaglia has been partially supported by the GNAMPA (INdAM -- Istituto Nazionale di Alta Matematica).}

\begin{document}
\begin{abstract} A famous result of Klee from 1981 is that the Banach space $\ell_1(\kappa)$ admits a disjoint tiling by balls of radius $1$, for all cardinals $\kappa$ with $\kappa^\omega =\kappa$. Klee also observed that the smallest cardinal in which such a tiling might exist is $\kappa= 2^{\aleph_0}$, leaving open the question whether, for $\kappa< 2^{\aleph_0}$, $\ell_1(\kappa)$ might admit a tiling by balls at all. Our main result answers this question in the negative, proving in particular that $\ell_1$ does not admit any tiling by balls. We also give a companion result about star-$n$-finite coverings by balls of $\ell_1(\kappa)$ and we give a construction of a star-finite tiling of $\mathcal{X} \oplus_\infty c_{00}$, for each space $\mathcal{X}$ whose dimension is at most countable.
\end{abstract}
\maketitle

\section{Introduction}
One of the most important results on tilings of infinite-dimensional spaces is Klee's celebrated construction of a disjoint tiling of $\ell_1(\kappa)$ by balls of radius $1$, for each cardinal $\kappa$ such that $\kappa^\omega =\kappa$, \cite{Klee_MathAnn}. This construction has been revisited, modified, or generalised in several papers, for instance, \cites{DRSS_packing, DRS_hilbert, DEVEtiling, Klee_nice_tilings, KleeMalZan, Swanepoel}. Our paper is also motivated by this result and focuses on the role of the cardinal $\kappa$. In fact, Klee himself noted that, when $\kappa< 2^{\aleph_0}$, $\ell_1(\kappa)$ does not admit any \emph{disjoint} tiling by balls of radius $1$, \cite{Klee_MathAnn}*{Theorem 3.5}. This result left open the question whether said $\ell_1(\kappa)$ spaces might admit any tiling by balls at all, which we study (and answer in the negative) in our paper.

There are several results in the literature that preclude the existence of a tiling by balls (or, more generally, by bodies), based on some smoothness/rotundity properties. For instance, no separable normed space can admit a tiling by rotund bodies, \cite{KleeMalZan}; likewise, separable Banach spaces do not admit tilings by smooth and bounded convex bodies, \cite{KleeTri}. Without the separability assumption, it is proved in \cite{DEVEtiling} that Fr\'echet smooth or LUR Banach spaces do not admit tilings by balls. In the same paper it is also shown that normed spaces whose unit ball has some LUR point don't admit tilings by balls with radii bounded below. However, it is apparent that none of the above results applies to $\ell_1(\kappa)$.

Another related problem on which we provide some, this time partial, contribution is the study of star-finite coverings of Banach spaces, \cites{Breen, DESOVEstar, Nielsen}. A covering $\C$ of a normed space $\X$ is star-finite if each set in $\C$ intersects only finitely many elements of $\C$. Vesel\'y and the first- and fourth-named authors proved in \cite{DESOVEstar} that LUR, or uniformly Fr\'echet smooth, infinite-dimensional Banach spaces do not admit star-finite coverings by balls. It is also proved there that, for infinite $\kappa$, the space $c_0(\kappa)$ does not admit a star-finite covering by balls either. Since there is no known example of a separable Banach space admitting such a covering, the next space that is natural to consider after the results in \cite{DESOVEstar} is $\ell_1$. For the statement of our result in this direction, we need one definition. A covering $\C$ of a normed space is \emph{star-$n$-finite} ($n\in \N$) if each set $C\in \C$ intersects at most $n$ sets of $\C\setminus \{C\}$. Let us now state the main result of our paper.

\begin{mainth}\label{mth: ell1} Let $\kappa$ be an infinite cardinal with $\kappa<2^{\aleph_0}$ and $n\in \N$. Then:
\begin{enumerate}
    \item\label{i: mth ell1} $\ell_1(\kappa)$ does not admit tilings by balls;
    \item $\ell_1(\kappa)$ does not admit star-$n$-finite coverings by balls (of positive radius).
\end{enumerate}
\end{mainth}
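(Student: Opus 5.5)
The plan is a cardinality argument. A tiling, or a star-$n$-finite covering, of $\ell_1(\kappa)$ by balls has at most $\kappa<2^{\aleph_0}$ members. We will find a point where $2^{\aleph_0}$ members are forced to meet.

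\emph{Step 1: counting and reduction.} The balls of a tiling have nonempty, pairwise disjoint interiors, and $\dens\ell_1(\kappa)=\kappa$. Hence a tiling $\mathcal{B}=\{B(c_\gamma,r_\gamma)\}_\gamma$ has at most $\kappa$ tiles. For a star-$n$-finite covering, local finiteness of the star structure together with separability arguments gives the same bound. Let $\Gamma_0$ be the union of the (countable) supports of the centres $c_\gamma$; then $|\Gamma_0|\le\kappa$. We would like a countable set of coordinates $A$ on which all centres are "tame", but $\Gamma_0$ may exhaust $\kappa$. So we work instead inside the countable-dimensional structure generated near one point, and count sign patterns rather than coordinates.

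\emph{Step 2: local geometry of an $\ell_1$ ball at a boundary point.} Let $x\in S(c,r)$, write $s=\mathrm{sign}(x-c)$, and let $Z=\{i:x_i=c_i\}$. For small $d$ we have
\[
x+d\in B(c,r)\iff \sum_{i\in Z}|d_i|\le -\sum_{i\notin Z}s_i d_i ,
\]
so near $x$ the ball is the translate of a cone $K_{s,Z}$. In particular, if $d$ is supported in $Z$, only $d=0$ lies in this cone.

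Now fix a point $x$ and a countable set $A$ of coordinates. Consider the small directions $d=\sum_{i\in A}\epsilon_i t_i e_i$ with $\epsilon\in\{\pm1\}^A$ and fixed weights $t_i>0$, $\sum_i t_i<\infty$. There are $2^{\aleph_0}$ sign vectors $\epsilon$. A point $x+\lambda d$ with small $\lambda>0$ lies in a tile $B$; if it lies near $x$ for all small $\lambda$ and $B\ni x$, then $d\in K_{s,Z}$ for that tile. The inequality above forces the pattern $\epsilon$ restricted to $A\setminus Z$ to be "anti-aligned" with $s$ in a quantitative way. We aim to show that a single cone $K_{s,Z}$ contains the directions $d_\epsilon$ for at most one pattern (or for a set of patterns that is nowhere dense and closed in $\{\pm1\}^A$). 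Then covering all $2^{\aleph_0}$ directions at $x$ by tiles through $x$ requires either $2^{\aleph_0}$ tiles, which is impossible, or, via Baire in the Cantor space $\{\pm1\}^A$, a tile not through $x$ accumulating at $x$.

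\emph{Step 3: choosing $x$.} The point $x$ must be chosen so that every tile meeting a small neighbourhood of $x$ actually contains $x$. Equivalently, $x$ must be a point where the tiling is "pinched". We would obtain it by a nested-ball/Baire construction. Inductively pick points $x_k$ and coordinates $a_k\in A$ such that the set of tiles meeting $B(x_k,\rho_k)$ but missing $x$ is killed step by step. Here we use that there are only $\kappa<2^{\aleph_0}$ tiles, while there are $2^{\aleph_0}$ branches of a dyadic tree of candidate limit points $x_\sigma$, $\sigma\in\{\pm1\}^{\N}$. Each tile can contain $x_\sigma$ on its sphere for only "few" $\sigma$ (the cone computation again), so some branch $\sigma$ avoids all the bad behaviour.

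For part (ii) the same scheme is used, but it is easier. Star-$n$-finiteness bounds the number of balls through any point by $n+1$. It then suffices to take $A$ finite with $2^{|A|}>$ (number of patterns that $n+1$ cones can contain), and no Baire argument is needed, so only Steps 2 and a finite version of 3 are required.

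The main obstacle I expect is Step 3: producing a point at which all nearby tiles pass through it, given that tiles need not be locally finite. I would first try a diagonal argument over the dyadic tree $\{x_\sigma\}$, exploiting $\kappa<2^{\aleph_0}$, to find a branch that no tile captures in its interior.
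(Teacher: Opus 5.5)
There is a genuine gap, and it sits where you expect it. Step~3 carries the whole proof, and the proposal does not carry it out. You need a point $x$ that lies on the boundary of some tile and at which the tiling is locally finite. At interior points your ``pinched'' condition holds trivially and gives nothing.

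The local statement Step~2 aims at is in fact easy, much easier than a pattern count. Suppose $x\in\partial B$ for a tile $B$ and only finitely many tiles $B_1,\dots,B_m$ contain $x$; otherwise there is nothing to prove. Then $x\in\partial B_j$ for every $j$, by non-overlapping. Given $\lambda>0$, pick a coordinate $i$ with $|x_i-c(B_j)_i|<\lambda/2$ for all $j$. Then $\|x+\lambda e_i-c(B_j)\|\geq r(B_j)+\lambda-2|x_i-c(B_j)_i|>r(B_j)$, so no neighbourhood of $x$ is covered by $B_1,\dots,B_m$. Hence \emph{every} boundary point of every tile is singular.

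This is not a contradiction: it is exactly what happens in Klee's disjoint tilings when $\kappa^\omega=\kappa$. So $\kappa<2^{\aleph_0}$ has to be turned into a category argument on the singular set. Your dyadic-tree sketch does not do this: the natural output of counting over branches is a point $x_\sigma$ lying on no tile sphere, i.e.\ an interior point.

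Step~2 as formulated is also false. If $\sum_{i\in A\setminus Z}t_i>\sum_{i\in A\cap Z}t_i$ (e.g.\ $A\cap Z=\emptyset$), then $\epsilon_i=-s_i$ on $A\setminus Z$ satisfies your inequality strictly. Since $\epsilon\mapsto\sum_{i\in A\setminus Z}s_i\epsilon_i t_i$ is continuous, so does an open set of patterns. A tile containing $x$ in its interior captures all patterns.

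The paper supplies the missing mechanism by a finite-dimensional reduction. It slices with $\Y=\ell_1^3$, which has (GIB) since $B(x,r)\cap\ell_1(I)=B(x_1,r-\|x_2\|)\cap\ell_1(I)$. The non-degenerate slices form a countable non-overlapping family $\mathcal{C}'$. The remainder $\Y\setminus\bigcup\mathcal{C}'$ is a $G_\delta$ set of cardinality $<2^{\aleph_0}$, hence countable by the perfect set property. This is where $\kappa<2^{\aleph_0}$ enters, and it yields a \emph{countable} non-overlapping covering of $\ell_1^3$ by possibly degenerate balls.

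Baire applied to the closed set $S_\B=\bigcup_{B}(S_\B\cap\partial B)$ then gives $B_0$, $x_0$, $\e$ with $S_\B\cap B(x_0,\e)\subseteq\partial B_0$. Meanwhile balls with radii tending to $0$ accumulate at $x_0$, and each contains a singular point outside $B_0$. For singletons this follows by minimality. For non-degenerate balls it follows because the vertices of the octahedron are non-protectable, by the solid-angle/Niven computation of \Cref{lemma: octahedron}. This is a contradiction.

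For (ii), your claim that no Baire argument is needed is wrong. At most $n+1$ balls through a point is compatible with a neighbourhood being covered, e.g.\ $x$ interior to one ball. Also, star-$n$-finite coverings need not be locally finite. The paper uses the topological fact that $n$ closed convex sets covering $\partial C$ in an $n$-dimensional space must cover $C$. So each ball of a minimal star-$n$-finite covering has a point covered only by itself, hence singular. This is combined with the same slicing (now by $\ell_1^n$) and the same Baire localisation.
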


The first clause of the theorem generalises Klee's result quoted at the beginning, while the second one is the promised partial answer to the problem of existence of star-finite coverings by balls in $\ell_1$. Together with the results in \cite{DESOVEstar}, this second part seems to suggest that no infinite-dimensional separable normed space could admit a star-$n$-finite covering by balls. The proof of \Cref{mth: ell1}, that we give in \Cref{sec: ell1}, also gives some slightly more general results, some of which require completeness in their proof. Our second main result was partially motivated by the question whether completeness could be omitted in their statements (see \Cref{rmk: noncomplete false}).

\begin{mainth}\label{mth: c0} Let $\X$ be a separable normed space. Then:
\begin{enumerate}
    \item\label{i: mth c0} the space $\X\oplus_\infty c_0$ admits a tiling by balls if and only if $\X$ does;
    \item\label{i: mth c00} if $\X$ has at most countable dimension, $\X\oplus_\infty c_{00}$ admits a star-finite tiling by balls.
\end{enumerate}
\end{mainth}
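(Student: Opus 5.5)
\emph{Part (i), the ``if'' direction.} I will use the fact that balls in $\X\oplus_\infty c_0$ are products $B_\X(x,r)\times B_{c_0}(y,r)$. The key observation is that $c_0$ admits, for every $r>0$, a tiling by balls of radius $r$: take the cubes $y+r[-1,1]^{\N}$ with $y\in 2r\ZZ^{(\N)}$ (finitely supported sequences with entries in $2r\ZZ$). Every $z\in c_0$ lies in one of them, because we may round each coordinate to a nearest point of $2r\ZZ$, and this rounding vanishes as soon as $|z_k|<r$; hence the centre is finitely supported. The interiors of these cubes are disjoint. Now let $\{B_\X(x_i,r_i)\}_i$ be a tiling of $\X$. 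I claim that the family
\[
B_\X(x_i,r_i)\times B_{c_0}(y,r_i), \qquad y\in 2r_i\ZZ^{(\N)},
\]
tiles $\X\oplus_\infty c_0$. Covering is immediate. The interior of a product is the product of the interiors, so two tiles have disjoint interiors whenever their $\X$-factors differ (different $i$) or their $c_0$-factors differ (same $i$).

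\emph{Part (i), the ``only if'' direction.} Here I will slice the tiling. Let $\{B((x_j,y_j),r_j)\}_j$ be a tiling of $\X\oplus_\infty c_0$. This space is separable and each tile has nonempty interior, so there are only countably many tiles. Each sphere $\{y\in c_0: \|y-y_j\|=r_j\}$ is closed and nowhere dense, so by Baire's theorem there is $y\in c_0$ with $\|y-y_j\|\neq r_j$ for every $j$. The slice of tile $j$ at height $y$ is empty when $\|y-y_j\|>r_j$, and equals $B_\X(x_j,r_j)$ when $\|y-y_j\|<r_j$. These slices cover $\X$, since the tiles cover $\X\times\{y\}$. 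Their interiors are disjoint: if $x$ lay in the interiors of two slices, then the strict inequality $\|y-y_j\|<r_j$ would put $(x,y)$ in the interiors of both tiles. Hence the slices form a tiling of $\X$ by balls.

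\emph{Part (ii).} I plan an inductive construction along a flag of finite-dimensional subspaces. Fix a Hamel basis $(e_k)$ of $\X$ and put $E_n=\spn\{e_1,\dots,e_n\}\oplus_\infty \R^n$, where $\R^n\subseteq c_{00}$ denotes the first $n$ coordinates, so that $\X\oplus_\infty c_{00}=\bigcup_n E_n$. The idea is to assign to stage $n$ the points whose ``first appearance'' is in $E_n$. These are covered by balls centred in $E_n$, whose $n$-th $c_{00}$-coordinate is chosen so that the cube factor in coordinate $n$ fits exactly against the tiles of the previous stages. The $\ell_\infty$-structure of the $c_{00}$ coordinates is what makes such cube-like stacking possible: a ball is a product of intervals in the coordinates, times a ball of $\X$. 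Radii should shrink in a controlled way with the stage and with the distance from the region already tiled, so that each tile meets only tiles from finitely many stages and, within one stage, only finitely many tiles. Within a single stage, local finiteness comes from compactness of bounded sets in the finite-dimensional $E_n$.

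The main obstacle is reconciling exact covering with disjointness and star-finiteness when passing from $E_{n-1}$ to $E_n$. The new tiles must fill precisely the complement of the earlier ones without their interiors touching infinitely many earlier tiles. I would first try to keep, at every stage, the union of the tiles constructed so far of product form in the last $c_{00}$ coordinate, namely (region of $E_{n-1}$)$\times[-a,a]$. This would let the next stage fill the slabs $|t|\geq a$ by a lattice of cubes in the new coordinate, crossed with a tiling of the $\X$-part obtained as in part (i).
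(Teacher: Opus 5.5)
Your part (i) is correct and is essentially the paper's proof. The ``if'' direction uses the same balls $B((x_k,r_kz),r_k)$, with $z$ running through the $2\ZZ$-valued (hence finitely supported) sequences. Your Baire choice of a height $y\in c_0$ avoiding all spheres $\{\|y-y_j\|=r_j\}$ is exactly the proof of \Cref{fact: good intersection with balls} for the quotient $(\X\oplus_\infty c_0)/\X\cong c_0$.

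\textbf{The gap is in part (ii).} There you give only a strategy, you leave its main difficulty open, and both concrete mechanisms you suggest fail in general.

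First, crossing a lattice of cubes with ``a tiling of the $\X$-part obtained as in part (i)'' presupposes a tiling of $\X$ (or of its finite-dimensional sections) by balls. Such a tiling typically does not exist, e.g.\ for $\X=\ell_2^2$ or $\X=\ell_1^3$, and these are exactly the spaces for which (ii) has content (\Cref{rmk: noncomplete false}).

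Second, centring the stage-$n$ balls in $E_n$ is fatal. All tiles of stages $\le n$ would then have centres in $E_n$ and together cover $E_n$. Non-overlapping balls satisfy $\nn{c-c'}\ge r+r'$, so their traces on $E_n$ would form a countable tiling of the finite-dimensional space $E_n$ by its own balls. By \Cref{prop: finite dim}\eqref{item: non-overlapping implies protectable}, every point of $S_{E_n}$ would then be protectable. But for $\X=\ell_2^2$ and $n\ge 2$ the point $(u,0)$ with $\|u\|_2=1$ is not protectable. Every ball through $(u,0)$ that does not overlap $B_{E_n}$ has $\ell_2^2$-factor $B_{\ell_2^2}((1+r)u,r)$. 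The points $(u+\e w,0)$, with $w\perp u$, $\|w\|_2=1$ and $\e>0$, lie neither in any of these balls nor in $B_{E_n}$.

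The underlying reason is this. If all tiles meeting a slice $\X\times\{v\}$ met it in their interiors, the slices would tile $\X$, exactly as in your part (i). So when $\X$ has no ball tiling, every slice must touch some tile only on the boundary of its $c_{00}$-factor. This is possible in $c_{00}$, which is a countable union of spheres, but not in $c_0$.

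\textbf{The missing idea is the paper's construction, which exploits exactly this.} In the paper's notation, let $(y_k)$ be a Hamel basis of $\X$ and $(e_k)$ the unit vectors of $c_{00}$. Then $\Z_n=\spn\{y_k,e_k\}_{k\le n}$ is your $E_n$; put also $\Y_n=\spn(\X\cup\{e_1,\dots,e_n\})$. Let $C$ be the part of $\Z_n$ already covered, which is closed by induction.
\begin{enumerate}
\item \Cref{lemma: DESOVE} gives a star-finite but generally overlapping family $\{B_{\Y_n}(y'_k,r_k)\}_k$ of balls of $\Y_n$ centred in $\Z_n$. It covers $\Z_n\setminus C$, misses $C$, and has its singular points in $C$.
\item The centres are then pushed off $\Y_n$ into the fresh coordinates $f_1,f_2,\dots$ (i.e.\ $e_{n+1},e_{n+2},\dots$), namely $x_k=(y'_k,\,r_k(f_1+\dots+f_{k-1}-f_k))$.
\item Comparing $f_k$-coordinates gives $\nn{x_m-x_k}\ge r_k+r_m$ for $k<m$, so the lifted balls do not overlap.
\item Since $\|r_k(f_1+\dots+f_{k-1}-f_k)\|_\infty=r_k$, we get $B(x_k,r_k)\cap\Y_n=B_{\Y_n}(y'_k,r_k)$. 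So the traces, and hence the covering of $\Z_n\setminus C$, are unchanged, while $\Z_n$ now touches each new ball only on the boundary of its $c_{00}$-factor.
\end{enumerate}
Star-finiteness, disjointness from the earlier stages and closedness of the union then follow as in \Cref{lemma: induction}, and \Cref{thm: Y+c00} carries out the induction.
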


The proof of \Cref{mth: c0} will be given in \Cref{sec: c0}. The first part follows directly from the more general results of \Cref{sec: ell1} mentioned above, while the second part shows that $c_0$ cannot be replaced by $c_{00}$ in the former. In particular, \eqref{i: mth c00} implies that $c_{00}$ admits a star-finite tiling by balls and gives the first example of an infinite-dimensional separable normed space admitting such a tiling (the results in \cite{DESOVEstar} only give star-finite \emph{coverings}). Notice also that one cannot obtain a star-finite tiling in \eqref{i: mth c0}, since $c_0$ itself doesn't even have star-finite coverings by balls.

In conclusion, we mention one connection of \Cref{mth: ell1} with packing problems, \cites{DRSS_packing, Swanepoel} (and we refer to these papers for unexplained notions that we use in this paragraph). It follows from results in \cites{DRSS_packing, DRS_hilbert} that there exists a collection of non-overlapping balls of radius $1$ in $\ell_1$ whose union is dense in $\ell_1$ (and whose centers are a subgroup). Specifically, the result is proved for $\ell_1(\kappa)$, for all infinite cardinals $\kappa$, in \cite{DRS_hilbert}*{Corollary 3.4}, while \cite{DRSS_packing} contains a more general result that applies in particular to all separable octahedral normed spaces. In terms of the lattice simultaneous packing and covering constant, this result implies that $\gamma^*(\ell_1)=1$. For finite dimensional spaces, a standard compactness argument shows that $\gamma^*(\X)=1$ if and only if $\X$ admits a lattice tiling by balls of radius $1$ and it remained open whether the same was true for infinite-dimensional normed spaces as well. Thus, \Cref{mth: ell1}\eqref{i: mth ell1} also answers this problem in the negative.

\section{Preliminaries}\label{sec: prelim}
If $\X$ is a non-trivial real normed space then  $B_{\X}$, $U_\X$, and $S_{\X}$ denote the closed unit ball, the open unit ball, and the unit sphere of $\X$, respectively. Moreover, we denote by $B(x,\e)$, or occasionally $B_\X(x,\e)$, the closed ball in $\X$ with radius $\e\geq 0$ and centre $x$. When $\e=0$, we say that $B(x,0)= \{0\}$ is a \emph{degenerate ball}. In general, by a \emph{ball} in $\X$ we mean a (possibly degenerate) closed ball in $\X$. If $B\subseteq \X$ is a ball then  $c(B)$ and $r(B)$ denote its centre and radius, respectively.

Let $\F$ be a family of non-empty sets in a normed space $\X$. By $\bigcup \F$ we mean the union of all members of $\F$. For $x\in \X$ we denote
\[ \F_x\coloneqq \{F\in \F\colon x\in F\}. \]
Thus, $\F$ is a \emph{covering} of $\X$ if and only if $\F_x\neq \emptyset$ for each $x\in \X$ if and only if $\bigcup \F=\X$. A family $\F$ is \emph{non-overlapping} if the elements of $\F$ have mutually disjoint interiors. The family $\F$ is a \emph{tiling} if it is a non-overlapping covering and each element of $\F$ is a closed convex set with non-empty interior. In all the paper, we will only consider families $\F$ of (possibly degenerate) balls. Let us explicitly point the reader's attention to one subtlety: since we allow degenerate balls, a non-overlapping covering is in general not a tiling (this deviates from the standard notation, but for us it is a necessary distinction).

Let us also record explicitly the notions that we already mentioned in the Introduction. A family $\F$ is \emph{star-finite} if each element of $\F$ intersects finitely many elements of the family $\F$. For $n\in \N_0$, the family is \emph{star-$n$-finite} if each $F\in \F$ intersects at most $n$ elements of $\F\setminus \{F\}$. For instance, star-$0$-finite means that the sets in $\F$ are mutually disjoint.
	
Given a covering $\F$ for $\X$, we say that $\F$ is \emph{minimal} if no proper subfamily of $\F$ is a covering for $\X$. While general coverings might not admit minimal subcovers (consider, \emph{e.g.}, $\{n\cdot B_\X\}_{n\in \N}$), we will need the fact that every star-finite covering admits a minimal subcover (\cite{DESOVEstar}*{p.~3}). Likewise, it is easy to check that a non-overlapping covering by balls admits a minimal subcover (take all non-degenerate balls and all singletons that are not contained in the non-degenerate balls).

A point $x\in \X$ is a \emph{regular point} for $\F$ if it admits a neighbourhood that intersects finitely many members of $\F$. Points that are not regular are called \emph{singular}. We will denote by $S_\F$ the collection of singular points for $\F$; notice that it is a closed set. Finally, we say that $\F$ is \emph{protective for $x$} if $x\in \inte \bigl(\bigcup \F_x\bigr)$.

We conclude this section with a couple of simple remarks on singular points. Suppose that $\F$ is a covering of $\X$, and $x\in \X$. If, for some $F\in \F$, $x\in \partial F$ and $\F_x= \{F\}$, then $x$ is a singular point for $\F$ (this follows, \emph{e.g.}, from a small modification of \Cref{fact: non-protect point} below, or of \cite{Klee_MathAnn}*{Theorem~1.1}). As a consequence, if $\F$ is a minimal covering, all singletons in $\F$ are singular points. Moreover, if the covering $\F$ is star-finite or non-overlapping, no singular point can be in the interior of some $F\in \F$.

\section{Proof of \texorpdfstring{\Cref{mth: ell1}}{Theorem A}}\label{sec: ell1}
The proofs of our two results concerning tilings and coverings follow the same general pattern, and we will prove the two results at once. Both require first a result in finite dimensions (\Cref{prop: finite dim}), based on applying the Baire category theorem to the set of singular points, and then a finite dimensional reduction to deduce the main result (\Cref{thm: fd reduction}). This finite-dimensional reduction is somewhat similar to the arguments in \cite{DEBEpointfinite}*{Theorem~3.3} and \cite{Klee_MathAnn}*{Proposition~3.3}. When applying the Baire category theorem to the set of singular points, we need a sufficiently large supply of them. The method to obtain singular points is different in the two cases, thus we split the results we need into the first two subsections.

\subsection{Protectable points} 
Loosely speaking, the first result we need is that it is impossible to construct local tilings by regular octahedra in $\R^3$. To make this precise, we introduce the following definition, inspired by \cite{Klee_nice_tilings}*{Section 1}.

\begin{definition} We say that $x\in S_{\X}$ is \emph{protectable} if there exists a family $\B$ of non-overlapping balls containing $x$ such that $B_\X \in\B$ and $\B$ is protective for $x$. 
\end{definition}

In other words, the definition requires that the family $\B$ contains $B_\X$, consists of non-overlapping balls containing $x$, and its union is a neighbourhood of $x$. Notice that, since only balls containing $x$ are taken into consideration and the point $x$ itself is already covered by $B_\X$, we can suppose that all balls in $\B$ have positive radius.

\begin{fact}\label{fact: non-protect point} Suppose that a point $x\in S_\X$ is not protectable. Then, for every non-overlapping covering $\B$ of $\X$ by balls, with $B_\X\in \B$, $x$ is a singular point for $\B$. As a consequence:
\begin{enumerate}
    \item\label{item: 2 sing pts} If $S_\X$ admits a non-protectable point, each non-singleton element of a covering $\B$ of $\X$ by non-overlapping balls admits at least two antipodal singular points.
    \item\label{item: loc fin => protect} If $\X$ admits a non-overlapping and locally finite covering by balls, every point in $S_\X$ is protectable.
\end{enumerate}
\end{fact}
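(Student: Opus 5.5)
The main claim I would prove by contraposition: if $x$ is a regular point for $\B$, then $x$ is protectable. The two consequences will then follow by translation and scaling.

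\emph{Main claim.} Let $\B$ be a non-overlapping covering of $\X$ by balls with $B_\X\in\B$, and suppose $x\in S_\X$ is regular for $\B$. Choose an open neighbourhood $V$ of $x$ meeting only finitely many members of $\B$, say $B_1,\dots,B_k$. Every ball in $\B$ that does not contain $x$ is closed, so among $B_1,\dots,B_k$ those with $x\notin B_i$ can be avoided by shrinking $V$ to a smaller open $W\ni x$ disjoint from all of them. This uses finiteness in an essential way, because a finite union of closed sets is closed.

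Since $\B$ covers $\X$, we get $W\subseteq\bigcup\B_x$. Hence $x\in\inte\bigl(\bigcup\B_x\bigr)$, i.e., $\B_x$ is protective for $x$. The family $\B_x$ consists of non-overlapping balls containing $x$, and it contains $B_\X$ because $x\in S_\X\subseteq B_\X$. So $\B_x$ witnesses that $x$ is protectable, which contradicts the hypothesis. Therefore every non-overlapping covering by balls containing $B_\X$ has $x$ as a singular point.

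\emph{Consequence (i).} Let $x_0\in S_\X$ be non-protectable. Then $-x_0$ is also non-protectable, since the map $y\mapsto -y$ sends a witnessing family for $-x_0$ to one for $x_0$, preserving balls, non-overlap and $B_\X$. Now take a non-singleton ball $B=B(c,r)\in\B$, so $r>0$. Apply the affine map $T(y)=(y-c)/r$. The family $T(\B)$ is again a non-overlapping covering by balls, it contains $B_\X$, and singular points correspond under $T$ because $T$ is a homeomorphism that maps members to members. By the main claim, $x_0$ and $-x_0$ are singular for $T(\B)$, so $c\pm rx_0\in\partial B$ are two antipodal singular points of $\B$ on $B$.

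\emph{Consequence (ii).} If $\B$ is a non-overlapping, locally finite covering by balls, then $S_\B=\emptyset$. The covering cannot consist of singletons only, since a normed space is not a countable-or-otherwise locally finite union of points; more directly, local finiteness at any point forces some ball to have nonempty interior there. Pick a non-degenerate $B\in\B$ and normalize it as above to $B_\X$. If some point of $S_\X$ were non-protectable, the main claim would produce a singular point, which is impossible. Hence every point of $S_\X$ is protectable.

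The only delicate point is the shrinking step in the main claim, which requires that the finitely many nearby balls not containing $x$ are closed and hence at positive distance from $x$. Closedness holds by the convention on balls, and no other obstacle appears.
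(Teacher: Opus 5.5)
Your proposal is correct and follows essentially the paper's own route. The main claim is handled the same way: remove the finitely many closed balls near $x$ that miss $x$, and what remains ($\B_x$, which coincides with the paper's finite subfamily) is protective. Both consequences then follow, as in the paper, by the symmetry $y\mapsto -y$ and by translating and scaling a non-degenerate ball to $B_\X$.
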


\begin{proof} Towards a contradiction, suppose that there is a non-overlapping covering $\B$ of $\X$, with $B_\X \in \B$, such that $x$ is a regular point for $\B$. Thus, there are a neighbourhood $U$ of $x$ and balls $B_1,\dots, B_n\in \B$ such that $U\subseteq B_1\cup\dots \cup B_n$. Since these balls are closed, $U'\coloneqq U\setminus \bigcup\{B_j\colon j=1,\dots, n,\, x\notin B_j\}$ is a neighbourhood of $x$ as well. Finally, the family $\B'\coloneqq \{ B_j\colon j=1,\dots, n,\, x\in B_j \}$ is protective for $x$ (as $U'\subseteq\bigcup \B'$), a contradiction.

To deduce \eqref{item: 2 sing pts}, take any covering of $\X$ by non-overlapping balls, and a ball $B\in \B$ with positive radius. Up to translating and scaling $\B$, we can suppose that $B= B_\X$. If $x\in S_\X$ is non-protectable, by symmetry and the first part, $\pm x$ are singular points for $\B$.

Finally, for \eqref{item: loc fin => protect} it's enough to notice that if a covering $\B$ is locally finite, then it must contain at least one ball of positive radius. As before, we can then assume that $B_\X\in \B$. The claim then follows from the first part, as $\B$ doesn't have singular points.
\end{proof}

We will also need the fact that the regular octahedron in $\R^3$ has some non-protectable points; as it turns out, we can describe exactly all protectable points.
\begin{lemma}\label{lemma: octahedron} Let $\X=(\R^3,\n_1)$. Then $x\in S_{\X}$ is protectable if and only if it belongs to the relative interior of  a face of the regular octahedron $B_\X$.
\end{lemma}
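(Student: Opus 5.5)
We prove the two directions separately. The "if" direction is an explicit construction. The "only if" direction reduces, by a local tangent-cone argument, to a finite combinatorial check on polyhedral cones.

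\emph{Faces are protectable.} By the symmetries of $B_\X$ (permutations of coordinates and sign changes) we may assume $x=(x_1,x_2,x_3)$ with $x_i>0$ and $x_1+x_2+x_3=1$, so $x$ lies in the relative interior of the face $\{y\in B_\X\colon y_1+y_2+y_3=1\}$. Fix $t>0$ smaller than $\min_i x_i$ and set $c\coloneqq x+t(1,1,1)$. We take $\B\coloneqq\{B_\X, B(c,3t)\}$ and check three things.
\begin{itemize}
\item \emph{$x\in B(c,3t)$.} Indeed $\|x-c\|_1=3t$.
\item \emph{Non-overlapping.} For every $y$ we have $\|y-c\|_1\geq \sum_i (c_i-y_i)=1+3t-\sum_i y_i$. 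Hence $B(c,3t)\subseteq\{\sum_i y_i\geq 1\}$, while $B_\X\subseteq\{\sum_i y_i\le 1\}$, so the interiors are disjoint.
\item \emph{Protective.} If $y$ is close enough to $x$, then all coordinates of $y$ are positive and $y_i<c_i$ for every $i$. On such a neighbourhood, both $\|y\|_1$ and $\|y-c\|_1$ are affine functions of $y$. Explicitly, $B_\X$ coincides there with $\{\sum_i y_i\le 1\}$ and $B(c,3t)$ coincides with $\{\sum_i y_i\ge 1\}$. Their union is therefore a neighbourhood of $x$, so $\B$ is protective for $x$.
\end{itemize}

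\emph{Reduction to tangent cones.} For the converse, let $x\in S_\X$ lie on an edge or be a vertex, and suppose towards a contradiction that a family $\B$ as in the definition exists. Since the union of a family of balls all containing $x$ is a neighbourhood of $x$, we would like to pass to finitely many balls. To do so, we use the fact that a ball $B(c,r)\ni x$ is determined near $x$ by its tangent cone
\[T_x B(c,r)=\{v\colon \langle s,v\rangle\le 0 \text{ for all } s\in S\},\]
where $S\subseteq\{\pm1\}^3$ is the set of sign vectors active at $x-c$. (If $x$ is an interior point of the ball, its tangent cone is all of $\R^3$, which contradicts non-overlapping with $B_\X$.) Only finitely many such cones exist, and balls with the same tangent cone overlap near $x$. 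So, after discarding repeats, $\B$ is finite. Moreover, a polyhedral ball coincides with $x+T_xB$ near $x$. Thus we obtain finitely many cones $T_xB$, $B\in\B$, with pairwise disjoint interiors, whose union is all of $\R^3$, and one of which is $T_xB_\X$.

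\emph{The combinatorial contradiction.} It remains to show that $T_xB_\X$ cannot be completed to such a finite tiling of $\R^3$ by cones of the form above.
\begin{itemize}
\item \emph{Edge case.} Take $x=(a,b,0)$ with $a,b>0$. Then $T_xB_\X=\{v_1+v_2+|v_3|\le 0\}$. We pass to a generic affine plane transverse to the edge direction $(1,-1,0)$, close to the plane $P=\spn\{(1,1,0),(0,0,1)\}$. There the cones cut out a tiling of the plane by finitely many convex polygonal sectors. Every edge of these sectors lies on a line whose normal is the projection of some $s\in\{\pm1\}^3$, that is, one of $(\pm2,\pm1)$ or $(0,\pm1)$. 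The complement of $T_xB_\X$ is a reflex region bounded by the rays $(-1,\pm2)$. Filling it with sectors forces a sector adjacent to the ray $(-1,2)$ on the side containing $(0,1)$. Its normals would be $(-2,-1)$ and $(2,-1)$, which correspond to the sign vectors $(-1,-1,-1)$ and $(1,1,-1)$. These two can be simultaneously active only at a vertex of an octahedron. We then check that the vertex cone $\{v_1,v_2 \text{ free},\ |v_1|+|v_2|\le v_3\}$ (up to symmetry) does not meet the required plane section properly, which gives the contradiction.
\item \emph{Vertex case.} Take $x=(1,0,0)$. Here $T_xB_\X=\{|v_2|+|v_3|\le -v_1\}$ is a square cone. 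We argue the same way, using a transverse plane $v_1=\text{const}$ and the lines generated by the sign vectors.
\end{itemize}

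The main obstacle is this last case analysis. The sections of 3-dimensional cones are not simply projections, and the argument must handle sectors that are contained in a cone not containing the edge direction. I would first try to control this by a solid-angle (or Euler-type) count of how the cones meet along the edge line, and only fall back to an explicit enumeration of the finitely many admissible cones if that count does not close the argument.
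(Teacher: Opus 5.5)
\textbf{There is a genuine gap in the \emph{only if} direction.} Your \emph{if} direction is correct. Your reduction of the \emph{only if} direction is also sound. Each ball of $\B$ coincides near $x$ with $x+T_xB$, and non-overlapping balls must have distinct cones, so $\B$ is finite. The cones $T_xB$, $B\in\B$, then have pairwise disjoint interiors and cover $\R^3$.

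The gap is the final step, which you never carry out. The one concrete deduction you offer in the edge case is not valid. In your coordinates $v=p(1,1,0)+q(0,0,1)$ on $P$, the sector adjacent to the ray $(-1,2)$ outside $T_xB_\X$ need not have normals $(-2,-1)$ and $(2,-1)$. It can instead be bounded by the line $q=0$. An example is the edge cone $\{|v_1|\le v_2+v_3\}$, with active sign vectors $(\pm1,-1,-1)$, coming from a ball $B(c,r)$ with $x-c=(0,-a,-b)$. Its interior is disjoint from that of $T_xB_\X=\{v_1+v_2+|v_3|\le 0\}$, and its section with $P$ is exactly the sector between $(-1,2)$ and $(1,0)$. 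So nothing forces a vertex cone, and the case analysis branches. The section-versus-projection issue you flag is left open, and the vertex case is not addressed at all.

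\textbf{The missing idea is the solid-angle count}, which you mention only as a fallback; it closes the argument at once. The solid angles of the tangent cones of the regular octahedron are:
\begin{itemize}
\item $\omega_V=4\arccos(-\frac13)-2\pi=4\arcsin(\frac13)$ at a vertex, using the angle-sum formula with dihedral angle $\arccos(-\frac13)$;
\item $\omega_E=2\arccos(-\frac13)=\pi+2\arcsin(\frac13)$ on an edge;
\item $\omega_F=2\pi$ on a face.
\end{itemize}
Since your finitely many cones tile $\R^3$, their solid angles sum to $4\pi$. This gives $2(2a_V+a_E)\arcsin(\frac13)+(2a_F+a_E)\pi=4\pi$ for some non-negative integers $a_V,a_E,a_F$. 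By Niven's theorem, $\arcsin(\frac13)/\pi\notin\mathbb{Q}$, so $a_V=a_E=0$. This contradicts the fact that $T_xB_\X$ is itself an edge or vertex cone.

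This is exactly the paper's proof. Your tangent-cone reduction is a useful rigorous justification of the identity $\sum_{B\in\B}\theta_B=4\pi$, which the paper states without comment. Without the solid-angle values and the irrationality of $\arcsin(\frac13)/\pi$, however, your proposal does not establish the \emph{only if} direction.
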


\begin{proof} If $x$ belongs to the relative interior of a face then it is clearly protectable. Let us prove the other implication. Suppose that  there exists a family $\B$ of non-overlapping balls containing $x$ such that $B_\X\in\B$ and $\B$ is protective for $x$. Without loss of generality, we assume that all balls in $\B$ have positive radius. For every $B\in\B$, let $\theta_B$ denote the solid angle subtended by $x$ relative to the octahedron $B$. Then, $\sum_{B\in\B}\theta_B$ coincides with  $4\pi$ (the measure of the full solid angle). 
    
It is a standard fact (see, \emph{e.g.}, \cite{coxeter}*{p.~293}) that the solid angle subtended by a boundary point $x_0$ of a regular octahedron is equal to: \begin{itemize}
    \item $\omega_V\coloneqq 4\arcsin(\frac13)$, if $x_0$ is a vertex; 
    \item $\omega_E\coloneqq 2\arccos(-\frac13)= 2\arcsin(\frac13)+ \pi$, if $x_0$ is in the relative interior of an edge;
    \item $\omega_F\coloneqq 2\pi$, if $x_0$ is in the relative interior of a face.
    \end{itemize}
Since $\theta_B\in \{\omega_V,\omega_E,\omega_F\}$, there exist non-negative integers $a_V, a_E, a_F$ such that
\begin{eqnarray*}
    4\pi= \sum_{B\in\B}\theta_B&=& a_V\omega_V+ a_E\omega_E+ a_F\omega_F\\
    &=& 2(2a_V+a_E)\arcsin(\tfrac13)+(2a_F+a_E)\pi.
\end{eqnarray*}
Finally, $\arcsin(\frac13)$ is not a rational multiple of $\pi$ by Niven's theorem (see, \emph{e.g.}, \cite{Niven}*{Corollary~3.12}); therefore, we necessarily have that $a_V= a_E= 0$ and $a_F=2$, proving that $x$ belongs to the relative interior of a face of $B_\Y$, as desired.
\end{proof}

\subsection{Star-\texorpdfstring{$n$}{n}-finite coverings}
The next result we need is that star-$n$-finite coverings in $n$-dimensional spaces necessarily admit many singular points.

\begin{lemma}\label{lemma: sing point in dim n} Let $\B$ be a minimal covering by balls of a normed space $\X$ of dimension $n$. If $\B$ is star-$n$-finite, then every element of $\B$ contains a singular point. In particular, if $\B$ is locally finite, it is not star-$n$-finite.
\end{lemma}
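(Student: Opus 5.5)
The plan is to argue by contradiction, combining the regularity of all points of a fixed ball with a Lusternik--Schnirelmann type covering theorem for the sphere $S^{n-1}$. The step I have not closed is extracting a contradiction from an antipodal pair (second paragraph).

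Let $B\in\B$ and suppose that $B$ contains no singular point. If $B$ is a singleton, then by the remark at the end of \Cref{sec: prelim} it is a singular point of the minimal covering $\B$, a contradiction; so $r(B)>0$. After translating and rescaling, $B=B_\X$. Every $x\in S_\X$ is regular. By the same remark, $\B_x\neq\{B\}$, so $x$ lies in some other element of $\B$. By star-$n$-finiteness, only the balls $C_1,\dots,C_k$ ($k\leq n$) of $\B\setminus\{B\}$ meet $B$. Hence
\[ S_\X\subseteq (C_1\cap S_\X)\cup\dots\cup(C_k\cap S_\X), \]
a covering of the $(n-1)$-sphere $S_\X$ by at most $n$ closed sets. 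By Lusternik--Schnirelmann, padding with empty sets if $k<n$, some $C_i$ contains an antipodal pair $\pm x$, $x\in S_\X$. By convexity, $0\in C_i$ and $C_i\supseteq[-x,x]$.

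The remaining task is to turn the antipodal pair into a contradiction with minimality, and I expect this to be the main obstacle. It does not follow from a naive ``$C_i\supseteq B$'' argument: in $\ell_\infty^2$ a large ball can pass through two antipodal boundary points of the unit ball without containing it. I would try two refinements. The first is to apply Lusternik--Schnirelmann not to $S_\X$ but to $(1+t)S_\X$ for small $t>0$. By regularity and compactness of $B$, a neighbourhood of $B$ meets only $C_1,\dots,C_k$ besides $B$, so $(1+t)S_\X$ is covered by the $C_i$ alone. An antipodal pair then gives $C_i\ni\pm(1+t)x$ for a sequence $t\to0$ with fixed $i$. The second is to use the set $P\coloneqq B\setminus(C_1\cup\dots\cup C_k)$, which is nonempty by minimality and consists of interior points of $B$. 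One can then replace $S_\X$ by the boundary of a star-shaped neighbourhood around a point $p\in P$. Running Lusternik--Schnirelmann on rays from $p$, some $C_i$ would contain points on opposite rays through $p$, hence $p\in C_i$ by convexity. That contradicts $p\notin C_i$, and this second route looks like the right way to close the argument. To make it work, I would take the sphere $\{p+\lambda(u)u\}$, with $\lambda(u)$ slightly larger than the exit distance of the ray $p+\mathbb{R}_+u$ from $B$. Its points lie outside $B$ but within the neighbourhood where only the $C_i$ occur, so they are covered by the $C_i$ alone, and the antipodal pair then gives $p\in C_i$.

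The ``in particular'' clause follows at once. A locally finite covering has no singular points and admits a minimal subcover, which is still locally finite. If the original covering were star-$n$-finite, so would be the subcover, and the first part would produce a singular point, which is impossible.
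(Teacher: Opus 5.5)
Your second route closes the argument and gives a complete proof. Present it as the proof rather than as a tentative idea, and drop the first route: with spheres centred at $0$ it only ever yields $0\in C_i$.

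\textbf{Same skeleton.} Both you and the paper reduce the lemma to showing that $\partial B$ is not covered by the at most $n$ balls of $\B\setminus\{B\}$ that meet $B$. Both use minimality to get a point of $B$ outside those balls. Both use the preliminary remark to turn a point $x\in\partial B$ with $\B_x=\{B\}$ into a singular point. The ``in particular'' clause is handled the same way in both.

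\textbf{Different key step.} The paper proves an elementary fact by induction on the dimension. If a bounded convex body $C$ in an $n$-dimensional space satisfies $\partial C\subseteq B_1\cup\dots\cup B_n$ with each $B_i$ closed and convex, then $C\subseteq B_1\cup\dots\cup B_n$. The inductive step takes an uncovered point $x_0$, chooses a hyperplane $H\ni x_0$ missing $B_n$, and applies the inductive hypothesis to $C\cap H$. Your Lusternik--Schnirelmann argument from an uncovered point $p$ proves exactly this fact in one stroke. It also makes the sharpness of the count $n$ transparent: both statements fail for $n+1$ sets, as the facets of a simplex show. The price is importing a theorem equivalent to Borsuk--Ulam, whereas the paper needs only finite-dimensional separation.

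\textbf{A simplification.} Your inflated sphere $\{p+\lambda(u)u\}$ and the compactness/neighbourhood step are unnecessary. You already have $\partial B=S_\X\subseteq C_1\cup\dots\cup C_k$ and $p\in\inte B$. Let $\rho(u)>0$ be the exit distance of the ray $p+\R_+u$ from $B$; it depends continuously on $u$. Apply Lusternik--Schnirelmann directly to the closed sets $F_i\coloneqq\{u\in S_\X\colon p+\rho(u)u\in C_i\}$. An antipodal pair $\pm u\in F_i$ puts $p+\rho(u)u$ and $p-\rho(-u)u$ in $C_i$, hence $p\in C_i$ by convexity, a contradiction.
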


\begin{proof} To begin with, we notice the following fact. Given a normed space $\X$ of dimension $n$, a bounded convex body $C$ in $\X$, and closed convex sets $B_1,\dots, B_n$ in $\X$ such that $\partial C\subseteq B_1\cup\dots\cup B_n$, then $C\subseteq B_1\cup\dots \cup B_n$. We prove this by induction on $n$, the case $n=1$ being clear. Suppose that the result holds for some $n- 1$ and, towards a contradiction, that it fails to hold for $n$. Hence, there are a normed space $\X$ of dimension $n$, a bounded convex body $C$ in $\X$, and closed convex sets $B_1, \dots ,B_n$ such that $\partial C\subseteq B_1\cup\dots\cup B_n$, but $C$ is not contained in $B_1\cup\dots\cup B_n$. Take $x_0\in \inte(C) \setminus [B_1\cup \dots\cup B_n]$ and find a hyperplane $H$ such that $x_0\in H$ and $H\cap B_n= \emptyset$. Let $C'\coloneqq C\cap H$ and $B_k'\coloneqq B_k\cap H$ ($k=1,\dots, n-1$). Then, $C'$ is a body in $H$, a normed space of dimension $n-1$. Further,
\[ \partial C'\subseteq (\partial C)\cap H \subseteq B_1'\cup \ldots \cup B_{n-1}', \]
and $x_0\in C'\setminus( B_1'\cup \ldots \cup B_{n-1}')$. This contradicts the inductive assumption.

We now prove the claimed result. It is enough to prove that, for each $B\in \B$, there exists a point $x\in \partial B$ such that $\B_{x}=\{B\}$. In fact, $\B$ being a covering, this implies that $x$ is singular. If $B=\{x\}$ is a singleton, then $\B_{x}=\{B\}$ holds just by minimality of $\B$. Thus, we can assume that $B$ is a body. If $\partial B$ were covered by elements of $\B\setminus \{B\}$, by star-$n$-finiteness, there would be balls $B_1,\dots, B_n$ with $\partial B\subseteq B_1\cup\dots \cup B_n$. By the fact at the beginning of the proof, $B\subseteq B_1\cup\dots \cup B_n$, which contradicts minimality.
\end{proof}

\begin{remark} The fact at the beginning of the proof is a folklore observation, very similar for instance to \cite{MP09}*{Lemma 1.1}. Incidentally, the argument above is valid for every bounded set $C$. In fact, if $C$ has non-empty interior, the argument above applies verbatim, while otherwise the statement is tautologically true.
\end{remark}

\subsection{Main results}
We can now prove our main results, and we begin with the crucial finite-dimensional step.

\begin{proposition}\label{prop: finite dim} Let $\X$ be a finite-dimensional normed space and let $\B$ be a countable covering by balls of $\X$.
\begin{enumerate}
    \item\label{item: non-overlapping implies protectable} If $\B$ is non-overlapping, every point in $S_{\X}$ is protectable.
    \item\label{item: dim=n not star-n-finite} If $n\coloneqq \dim \X$, then $\B$ is not star-$n$-finite.
\end{enumerate} 
\end{proposition}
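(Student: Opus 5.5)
Both clauses follow from one Baire category argument applied to the closed set $S_\B$ of singular points, combined with the supply of singular points given by \Cref{fact: non-protect point} and \Cref{lemma: sing point in dim n}.

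Suppose the conclusion fails. We first reduce to the situation where every element of $\B$ contains a singular point of $\B$.
\begin{itemize}
\item In case \eqref{item: non-overlapping implies protectable}, suppose some $x\in S_\X$ is not protectable. We pass to a minimal subcover, which is still countable and non-overlapping. By \Cref{fact: non-protect point}\eqref{item: 2 sing pts}, every non-degenerate ball then contains two (antipodal) singular points. Every singleton in a minimal cover is itself singular, by the preliminary remarks.
\item In case \eqref{item: dim=n not star-n-finite}, suppose $\B$ is star-$n$-finite. Then $\B$ is star-finite, so it admits a minimal subcover, which is still star-$n$-finite and countable. By \Cref{lemma: sing point in dim n}, every element of it contains a singular point.
\end{itemize}
In either case, rename the subcover $\B$. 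It is countable, every $B\in\B$ meets $S_\B$, and $S_\B$ is nonempty (in fact $\B$ is not locally finite).

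Next, apply Baire's theorem to the complete metric space $S_\B$, which is closed in the finite-dimensional space $\X$. We have $S_\B=\bigcup_{B\in\B}(B\cap S_\B)$, a countable union of closed sets. Hence some $B_0\in\B$ and some open set $V$ satisfy
\[ \emptyset\neq V\cap S_\B\subseteq B_0 . \]

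\begin{itemize}
\item Pick $y\in V\cap S_\B$. Since $y$ is singular, every neighbourhood of $y$ meets infinitely many members of $\B$. Because $\X$ is finite-dimensional and balls are closed, this should force points of $S_\B$ near $y$ lying off $B_0$.
\item Concretely, $y\notin\inte B_0$, since singular points cannot be interior points in the star-finite or non-overlapping case. So $y\in\partial B_0$, unless $B_0$ is a singleton.
\item Infinitely many balls $B_k\neq B_0$ accumulate at $y$. Each $B_k$ contains singular points. I want to show that some $B_k$ has a singular point inside $V\setminus B_0$, which contradicts $V\cap S_\B\subseteq B_0$.
\end{itemize}

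The main obstacle is this last step: showing that the balls accumulating at $y$ carry singular points inside $V$ and outside $B_0$. My first attempt would use the radii.
\begin{itemize}
\item If infinitely many $B_k$ meeting a small neighbourhood $W$ of $y$ have radius tending to $0$, those balls lie entirely in $V$. Each contains a singular point, which must then lie in $B_0$.
\item In the non-overlapping case, a small ball whose singular points lie in $B_0$ touches $B_0$. I would use antipodality: the ball has two antipodal singular points $c\pm r u$, and both must lie in $B_0$. By convexity the center lies in $B_0$, and for a small ball with non-empty interior this contradicts non-overlapping.
\item In the star-$n$-finite case, $B_0$ meets only finitely many balls, so infinitely many small balls near $y$ lie in $V$ but do not meet $B_0$. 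Their singular points are then in $V\setminus B_0$, a contradiction.
\item If instead the radii of the balls meeting $W$ are bounded below, then by local compactness only finitely many such balls can be non-overlapping (volume argument) or pairwise intersection-limited near $y$. Combined with the countability of $\B$, this should contradict $y$ being singular.
\end{itemize}
In the star-$n$-finite case this bounded-below case needs extra care: I would use that only finitely many balls meet $B_0$, so the accumulating balls avoid $B_0$ and force singular points off $B_0$ directly.

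If this direct route is too delicate, a fallback is to refine the Baire step: apply Baire to $S_\B\cap\bigcup$ over balls, iterated with respect to $\partial B_0$, so that singular points of neighbouring balls are forced to lie in $V$.
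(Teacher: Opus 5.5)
Your plan is essentially the paper's proof. You pass to a minimal subcover and apply Baire to $S_\B=\bigcup_{B\in\B}(S_\B\cap B)$ to get $B_0$ and a relatively open piece of $S_\B$ inside $B_0$. You then show that the balls accumulating at a point of that piece have radii tending to $0$, so eventually they lie in $V$. You finish with the antipodal singular points in the non-overlapping case, where the centre would lie in $B_0\cap\inte B_k$, and with a small ball missing $B_0$ plus \Cref{lemma: sing point in dim n} in the star-$n$-finite case.

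The one step to fix is the radius claim in the star-$n$-finite case. The ``extra care'' route you propose there does not work: a ball of radius bounded below that avoids $B_0$ may have all its singular points outside $V$, so you get no contradiction with $V\cap S_\B\subseteq B_0$. Your first idea is the right one. Since each ball meets at most $n$ others, you can greedily extract an infinite pairwise disjoint subfamily. Then run the same volume argument, using sub-balls of radius $\delta$ that meet $W$. This is exactly \Cref{claim: radius to 0} in the paper. Its conclusion is that the bounded-below alternative cannot occur, so the radii tend to $0$; countability of $\B$ plays no role there, and the fallback refinement of the Baire step is unnecessary.

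Finally, in case (i) some of the small accumulating members may be singletons, and your antipodality argument does not cover them. Dispose of them by minimality: a singleton $\{c\}\neq B_0$ with $c\in B_0$ would be redundant in the cover.
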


Notice that the assumption that $\B$ is countable is essential, as every finitely dimensional normed space can be written as a disjoint union of $2^{\aleph_0}$ degenerate balls.

\begin{proof} The first part of the proof is common for \eqref{item: non-overlapping implies protectable} and \eqref{item: dim=n not star-n-finite}. If $\B$ is locally finite, then the two items follow from \Cref{fact: non-protect point}\eqref{item: loc fin => protect} and \Cref{lemma: sing point in dim n}, respectively. Hence, we assume that $\B$ is not locally finite. Further, in \eqref{item: dim=n not star-n-finite} we can assume that $\B$ is star-finite, because otherwise it would not be star-$n$-finite either. Thus, in either case, we can also assume that $\B$ is minimal.

As a consequence, the set $S_\B$ of singular points is non-empty (and closed). As we mentioned in \Cref{sec: prelim}, the singular points of a non-overlapping or star-finite covering cannot be in the interior of any body of the covering, thus
\[ S_\B= \bigcup_{B\in \B} (S_\B\cap \partial B). \]
By the Baire category theorem there exist $B_0\in\B$, $x_0\in S_\B\cap\partial B_0$, and $\e>0$ such that
\begin{equation}\label{eq: Baire}
    S_\B\cap B(x_0,\e)\subseteq\partial B_0.
\end{equation}
Since $x_0$ is a singular point, there exists a sequence $(C_k)_{k\in\N}$ of pairwise distinct elements of $\B$ such that $C_k\cap B(x_0, \e/2)\neq \emptyset$ for each $k\in \N$.
\begin{claim}\label{claim: radius to 0} $\lim_{k\to \infty} r(C_k)=0$, hence $C_k\subseteq B(x_0,\e)$ for all sufficiently large $k\in \N$.
\end{claim}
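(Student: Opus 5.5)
We argue by contradiction. If $r(C_k)\not\to 0$, we pass to a subsequence with $r(C_k)\geq \delta>0$ for all $k$. Every $C_k$ meets the bounded set $B(x_0,\e/2)$, so the centres $c(C_k)$ form a bounded set. Since $\X$ is finite-dimensional, we can pass to a further subsequence so that $c(C_k)\to c$ and $r(C_k)\to r$, with $\delta\leq r<\infty$. Here $r$ is finite: a ball of huge radius meeting $B(x_0,\e/2)$ would contain a large ball near $x_0$, and this is incompatible with the structure below. Concretely, the radii are bounded because the $C_k$ are pairwise distinct and the argument we give next rules out even two large balls accumulating.

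Fix a point $z$ with $\|z-c\|<r/2$. For all large $k$ we have $\|c(C_k)-c\|<r/4$ and $r(C_k)>3r/4$. Then $B(z,r/4)\subseteq C_k$, because $\|y-c(C_k)\|\leq r/4+r/2+r/4=r$; making the inequalities slightly strict gives a radius below $r(C_k)$. Hence infinitely many distinct $C_k$ contain the common open set $z+\frac{r}{4}U_\X$.

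In case \eqref{item: non-overlapping implies protectable} this contradicts non-overlapping, since distinct balls would share interior points. In case \eqref{item: dim=n not star-n-finite} it contradicts star-finiteness, which we assumed: all these $C_k$ intersect one another, so $C_{k_0}$ meets infinitely many elements of $\B$. If unbounded radii have to be handled separately, the same trick applies directly. With $r(C_k)\to\infty$ and centres $c_k$ satisfying $\|c_k-x_0\|\leq r(C_k)+\e/2$, the point $y_k=x_0+\frac{\e}{\|c_k-x_0\|}(c_k-x_0)$ (or $x_0$ itself, if $c_k$ is near it) gives $B(y_k,\e/2)\subseteq C_k$ for large $k$. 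By compactness of $B(x_0,2\e)$ we pass to a subsequence with $y_k\to y$, and then $B(y,\e/4)\subseteq C_k$ for all large $k$, which yields the same contradiction. The bookkeeping in this step is the only mildly delicate point. It is cleanest to treat both cases uniformly: extract a subsequence with centres converging in the compact set and radii bounded below by $\delta$, then find a fixed small ball contained in infinitely many $C_k$.

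For the second statement, once $r(C_k)\to 0$ we pick $k_0$ with $r(C_k)<\e/4$ for $k\geq k_0$. Each such $C_k$ meets $B(x_0,\e/2)$, so every point of $C_k$ lies within $\e/2+2r(C_k)<\e$ of $x_0$, which gives $C_k\subseteq B(x_0,\e)$.
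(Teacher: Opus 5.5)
Your argument is in substance the paper's. With radii bounded below by $\delta$, finite-dimensional compactness forces infinitely many of the $C_k$ to share interior points. This contradicts non-overlapping in \eqref{item: non-overlapping implies protectable} and star-finiteness in \eqref{item: dim=n not star-n-finite}. The paper instead uses star-finiteness first to extract a mutually disjoint subsequence, so both cases reduce to one fact: a bounded subset of a finite-dimensional space cannot contain infinitely many non-overlapping balls of radius $\delta$. Contradicting star-finiteness directly via a common open set, as you do, is equivalent.

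Two details of your write-up need repair.

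First, the assertion that the centres $c(C_k)$ form a bounded set is false unless the radii are bounded, and your justification that $r<\infty$ is circular. What actually handles this is your separate, correct treatment of $r(C_k)\to\infty$. The proof should therefore be organised as an explicit dichotomy between bounded and unbounded radii.

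Second, in the bounded case the estimate $\|y-c(C_k)\|\leq r/4+r/2+r/4=r$ does not give $y\in C_k$, since you only know $r(C_k)>3r/4$. Taking $z=c$ gives $\|y-c(C_k)\|<r/2<r(C_k)$ instead.

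The paper's device removes the case split altogether. Replace each $C_k$ by a ball $C_k'\subseteq C_k$ of radius exactly $\delta$ that still meets $B(x_0,\e/2)$. Then all centres $c(C_k')$ lie in the compact set $B(x_0,\e/2+\delta)$. This is also the correct form of the ``uniform'' argument you sketch at the end: convergence of centres is only available for such shrunken balls, not for the $C_k$ themselves.

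Your deduction of $C_k\subseteq B(x_0,\e)$ from $r(C_k)\to 0$ is fine.
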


\begin{proof}[Proof of \Cref{claim: radius to 0}]\renewcommand\qedsymbol{$\square$}
If the conclusion is false, up to passing to a subsequence, we can assume that $r(C_k)\geq \delta>0$ for all $k\in \N$. Further, in case \eqref{item: dim=n not star-n-finite}, we can pass to a further subsequence and assume that the $C_k$'s are mutually disjoint; thus, we assume that $\{C_k\}_{k\in\N}$ is non-overlapping. Take balls $C'_k\subseteq C_k$ with $r(C'_k)= \delta$ and $C'_k\cap B(x_0,\e/2) \neq \emptyset$. Then, the ball $B(x_0, 2\delta+ \e/2)$ contains a sequence of non-overlapping balls of the same radius $\delta>0$, which is clearly impossible as $\X$ is finite dimensional.
\end{proof}

To prove \eqref{item: non-overlapping implies protectable}, assume $\B$ is non-overlapping and suppose, towards a contradiction, that there exists a point in $S_\X$ that is non-protectable. Let $k\in\N$ be such that $C_k \subseteq B(x_0,\e)$. If $C_k= \{c\}$, then by minimality of $\B$, $c$ is a singular point for $\B$ and $c\notin B_0$. If $r(C_k)>0$ then, by \Cref{fact: non-protect point}\eqref{item: 2 sing pts}, there exist two antipodal points in  $C_k \cap S_\B$. A simple geometric argument shows that at least one point does not belong to $B_0$ (otherwise, $C_k$ and $B_0$ would overlap). In any case, we get a contradiction by \eqref{eq: Baire}, and \eqref{item: non-overlapping implies protectable} is proved.

Finally, for \eqref{item: dim=n not star-n-finite}, assume by contradiction that $\B$ is star-$n$-finite and find $k\in \N$ so that $C_k\subseteq B(x_0,\e)$ and $C_k\cap B_0=\emptyset$. By \Cref{lemma: sing point in dim n}, $C_k$ contains a singular point, which contradicts \eqref{eq: Baire}.
\end{proof}

Finally, we can move to our infinite-dimensional results. In order to perform the finite-dimensional reduction, we need subspaces with the following property.

\begin{definition}\label{def: good subspace} Let $\X$ be a normed space. A subspace $\Y\subseteq \X$ has the \emph{good intersection of balls} property (property (GIB) for short) if the intersection of every ball of $\X$ with $\Y$ is either empty or a (possibly degenerate) ball in $\Y$.
\end{definition}

\begin{remark}\label{rmk: GIB} For example, if $I\subseteq \kappa$, the subspace $\ell_1(I)$ of $\ell_1(\kappa)$ has the (GIB). In fact, for $x\in \ell_1(\kappa)$, write $x=(x_1, x_2)\in \ell_1(I)\oplus_1 \ell_1(\kappa\setminus I)$. Then for all $r\geq \|x_2\|$ we have
\begin{equation}\label{eq: GIB for ell1}
    B(x,r)\cap \ell_1(I)= B(x_1,r- \|x_2\|)\cap \ell_1(I).
\end{equation}
A similar computation also works for the spaces $\ell_p(\kappa)$, $1\leq p\leq \infty$, and $c_0(\kappa)$. For later use, we notice that it also applies to the spaces of finitely supported sequences, as $c_{00}(\kappa)$, or the linear spans of the canonical basis in $\ell_p(\kappa)$, $1\leq p\leq \infty$.
\end{remark}

\begin{fact}\label{fact: good intersection with balls} Let $\Y$ be a proper closed subspace of a normed space $\X$ such that $\X/ \Y$ is complete and let $\B$ be a countable family of balls in $\X$. Then there exists $x_0\in \X$ such that, for all $B\in\B$ satisfying $(x_0+B)\cap \Y\neq \emptyset$, we have
\[ (x_0+ \inte B)\cap \Y\neq \emptyset. \]
In particular, if $\B$ is non-overlapping, then the same holds for the family
\[ \{(x_0+B)\cap \Y\colon B\in \B\}. \]
\end{fact}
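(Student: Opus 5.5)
The plan is to reduce the statement to the Baire category theorem in the quotient $\X/\Y$. Let $q\colon \X\to \X/\Y$ be the quotient map, so that $\|q(z)\|=\dist(z,\Y)$ for $z\in\X$. Since $\Y$ is proper and closed, $\X/\Y$ is a non-trivial normed space, and by assumption it is complete.

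\textbf{Step 1: intersection conditions in terms of $q$.} Fix a ball $B=B(c,r)\in\B$ and a candidate $x_0\in\X$. For $r>0$ I note two equivalences.
\begin{itemize}
\item $(x_0+\inte B)\cap\Y\neq\emptyset$ holds if and only if there is $y\in\Y$ with $\|x_0+c-y\|<r$, that is, if and only if $\|q(x_0)+q(c)\|<r$.
\item $(x_0+B)\cap\Y\neq\emptyset$ implies $\|q(x_0)+q(c)\|\leq r$.
\end{itemize}
Hence the only way $x_0$ can fail the requirement for $B$ is that $\|q(x_0)+q(c)\|=r$. So the bad set for $B$ is contained in $q^{-1}(S_B)$, where
\[ S_B\coloneqq \{z\in\X/\Y\colon \|z+q(c)\|=r\} \]
is a sphere in $\X/\Y$.

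If $r=0$, then $\inte B=\emptyset$, and the requirement can only hold vacuously. That is, we must have $x_0+c\notin\Y$. The bad set is then $q^{-1}(\{-q(c)\})$, and I set $S_B\coloneqq\{-q(c)\}$.

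\textbf{Step 2: Baire.} In either case $S_B$ is a closed nowhere dense subset of the non-trivial normed space $\X/\Y$. For spheres this holds because any point on a sphere is a limit of points $z+t(z+q(c))$ with $t\neq 0$ small, which lie off the sphere. For single points it holds because $\X/\Y\neq\{0\}$.

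Since $\B$ is countable and $\X/\Y$ is complete, the Baire category theorem provides some $z\in\X/\Y\setminus\bigcup_{B\in\B}S_B$. Choosing any $x_0$ with $q(x_0)=z$ gives the first assertion. This is the only place where completeness of $\X/\Y$ is used, and it is exactly the reason the paper requires it.

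\textbf{Step 3: the ``in particular''.} Suppose $\B$ is non-overlapping. Then the translates $x_0+B$ have pairwise disjoint interiors.

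For each $B$ with $(x_0+B)\cap\Y\neq\emptyset$, Step 2 ensures that the convex set $C\coloneqq x_0+B$ has interior meeting $\Y$. I will then invoke the standard convexity fact that, when $\inte C\cap\Y\neq\emptyset$, the interior of $C\cap\Y$ relative to $\Y$ equals $\inte C\cap\Y$. The proof is short. Take $p\in\inte C\cap\Y$ and $y$ in the relative interior of $C\cap\Y$. Extend the segment from $p$ through $y$ slightly beyond $y$ within $C\cap\Y$, to a point $y'$. Then $y$ is an interior point of the segment $[p,y']$ with $p\in\inte C$ and $y'\in C$, hence $y\in\inte C$.

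Consequently the relative interiors of the sets $(x_0+B)\cap\Y$ are contained in the pairwise disjoint sets $x_0+\inte B$. So the family $\{(x_0+B)\cap\Y\colon B\in\B\}$ is non-overlapping in $\Y$. The empty intersections are harmless and can be discarded.

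The only mildly delicate point is the bookkeeping of the degenerate balls in Steps 1 and 2, together with the fact that the sets $S_B$ are genuinely nowhere dense. Both are settled by the non-triviality of $\X/\Y$.
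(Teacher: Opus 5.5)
Your proof is correct and takes essentially the paper's route. You pass to the quotient $\X/\Y$, note that a bad $x_0$ for $B$ forces $-q(x_0)$ onto the sphere $q(c(B))+r(B)S_{\X/\Y}$ (a single point when $r(B)=0$), and then apply Baire, since countably many such nowhere dense sets cannot cover the complete space $\X/\Y$. Your Step 3, showing that the relative interior of $(x_0+B)\cap\Y$ lies in $x_0+\inte B$, supplies a detail that the paper leaves implicit for the ``in particular'' clause.
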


\begin{remark} It is worth pointing out that this fact is false in absence of completeness. In fact, taking $\Y=\{0\}$, the conclusion becomes: there exists $x_0\in \X$ such that, for all $B\in\B$, $0\in x_0+ B$ if and only if $0\in x_0+ \inte B$. Equivalently, there exists a point $x_0$ that doesn't belong to the boundary of any ball in $\B$. This condition does not hold in general, as it is possible that a normed space is the union of countably many spheres. To wit, consider the space $c_{00}$ and the spheres of radius $1$ centred at the points of the form $(k_1,\dots,k_n,0,\dots)$, where $k_1,\dots,k_n$ are odd integers.

The fact is also false if the family $\B$ is uncountable. In fact, similarly as above, the Banach space $c_0(\omega_1)$ is the union of $\omega_1$ spheres (of radius $1$). For this, it is enough to use as centres points of the form $(x(\alpha))_{\alpha<\omega_1}$ where each $x(\alpha)$ is either an odd integer or $0$.
\end{remark}

\begin{proof}[Proof of \Cref{fact: good intersection with balls}] Denote $\Z\coloneqq \X/ \Y$ and let $q\colon \X\to \Z$ be the canonical quotient map. If $B$ is a ball in $\X$ with positive radius, then (recall that $U_\Z= \inte B_\Z$)
\[ q(B)\subseteq q\left(c(B)\right)+r(B) B_\Z\quad \text{ and}\quad q(\inte B)=q\left(c(B)\right)+r(B) U_\Z. \]
In particular, if $x\in \X $ is such that $(x+B)\cap \Y\neq \emptyset$ and $(x+ \inte B)\cap \Y= \emptyset$, then
\begin{equation}\label{eq: project a ball}
    -q(x)\in q\left(c(B)\right)+r(B) S_\Z.
\end{equation} 
Note that the last implication holds trivially even if $B$ is a degenerate ball. Now, suppose on the contrary that our conclusion does not hold, that is, for every $x\in \X$ there exists $B\in \B$ such that $(x+B)\cap \Y\neq \emptyset$ and 
$ (x+ \inte B)\cap \Y= \emptyset$. Our previous observation implies that
\[\Z =\bigcup_{x\in \X}-q(x)=\bigcup_{B\in\B}q\left(c(B)\right)+r(B) S_\Z,\]
namely, $\Z$ can be written as a countable union of spheres (recall that $\B$ is countable by assumption). However, by the Baire category theorem this is a contradiction, as every sphere in $\Z$ is nowhere dense and $\Z$ is complete.
\end{proof}

\begin{theorem}\label{thm: fd reduction} Let $\X$ be a normed space with $\dens(\X)<2^{\aleph_0}$ and $\Y$ be a finite-dimensional subspace of $\X$ satisfying property (GIB). 
\begin{enumerate}
    \item\label{tiling sep} If $\X$ is a separable Banach space and $S_{\Y}$ contains a non-protectable point, then $\X$ does not admit tilings by balls.
    \item\label{tiling ell1} If $\kappa< 2^{\aleph_0}$, the space $\X= \ell_1(\kappa)$ does not admit tilings by balls.
    \item\label{star-n-finite} If the dimension of $\Y$ is at least $n$, then $\X$ does not admit star-$n$-finite coverings by closed balls, each of positive radius.
\end{enumerate}
\end{theorem}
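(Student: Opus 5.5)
The plan is to reduce each item to \Cref{prop: finite dim} by intersecting a suitable translate of the covering with $\Y$. Suppose $\B$ is a tiling (items (i), (ii)) or a star-$n$-finite covering by balls of positive radius (item (iii)) of $\X$.

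The first step is to show that $\B$ is countable. In the tiling case, the interiors of the balls in $\B$ are pairwise disjoint non-empty open sets. Hence $|\B|\leq \dens(\X)<2^{\aleph_0}$. For (i) this already gives countability, since $\X$ is separable. For (iii), star-finiteness plus a dense set of cardinality $\dens(\X)$ gives $|\B|\leq\dens(\X)$ as well, because each point of the dense set meets only finitely many balls after discarding redundancy. This alone is not countable when $\dens(\X)$ is uncountable, which is a genuine difficulty for (ii) and (iii). My plan there is to replace $\X$ by a separable piece. For $\ell_1(\kappa)$, the centres of the balls of $\B$ meeting a countable set have countable total support $I$. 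By enlarging $I$ through a countable closing-off argument, I would make the family $\{B\cap \ell_1(I)\colon B\in\B\}$ a covering of $\ell_1(I)$. By \eqref{eq: GIB for ell1}, this family consists of balls of $\ell_1(I)$, only countably many of which are non-degenerate. The degenerate pieces are where the hypothesis $\kappa<2^{\aleph_0}$ (or $\dens(\X)<2^{\aleph_0}$) must enter, to ensure that a generic translate avoids them.

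The main step is to apply \Cref{fact: good intersection with balls} to a separable complete space $\X'$ containing $\Y$ and to the countable family $\B'$ of balls meeting $\X'$ in a body. This produces $x_0$ such that every ball of $x_0+\B'$ that meets $\Y$ meets it in its interior. With (GIB), $\{(x_0+B)\cap\Y\}$ is then a countable covering of $\Y$ by balls. These balls are non-overlapping in the tiling case, and each is a relatively open-interior ball, hence non-degenerate, whenever it meets $\Y$. Any remaining sets are single points, and I would need $x_0$ chosen so that no point of $\Y$ is missed. Star-$n$-finiteness passes to the traces, since traces of disjoint balls are disjoint.

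Then \Cref{prop: finite dim} gives the conclusion in each case. For (i), every point of $S_\Y$ would be protectable, contrary to the hypothesis. For (iii), with $\dim\Y\geq n$, \Cref{prop: finite dim}\eqref{item: dim=n not star-n-finite} applied to an $n$-dimensional subspace $\Y_0\subseteq\Y$ shows the cover is not star-$n$-finite. I take $\Y_0$ spanned by coordinates, so that (GIB) is retained via \Cref{rmk: GIB}-type reasoning. For (ii), I take $\Y=\ell_1(\{1,2,3\})\cong(\R^3,\n_1)$, which has (GIB) by \Cref{rmk: GIB} and non-protectable points by \Cref{lemma: octahedron}, and then argue as in (i). Here completeness of the quotient $\X/\Y$ holds because $\Y$ is finite-dimensional.

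I expect the main obstacle to be controlling the uncountably many (or degenerate) balls in the non-separable case $\kappa<2^{\aleph_0}$. The first thing I would try is choosing $x_0$ in a countable-dimensional or separable subspace and bounding, by a cardinality argument, the balls whose boundary contains a given trace. Each such bad ball forces $x_0$ onto a sphere in the quotient, and fewer than $2^{\aleph_0}$ spheres in a finite-dimensional space of positive dimension cannot cover a line segment, since each meets a generic segment in at most finitely many points. This suggests choosing $x_0$ along a segment transverse to $\Y$ and counting.
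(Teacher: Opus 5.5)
There is a genuine gap, and it sits exactly where $\dens(\X)<2^{\aleph_0}$ has to be used: you never get rid of the degenerate traces. In (ii) and (iii) the family $\{B\cap\Y\colon B\in\B\}$ may contain up to $|\B|<2^{\aleph_0}$ singletons, while \Cref{prop: finite dim} needs a \emph{countable} covering. Your claim that $\{(x_0+B)\cap\Y\}$ is a countable covering of $\Y$ is not established. A point of $\Y$ may be covered only by translates of balls that meet $\X'$ in a single point; you flag this yourself but never resolve it. The closing-off step is vacuous, since the traces on $\ell_1(I)$ of the balls of $\B$ always cover $\ell_1(I)$. For (iii) in a non-separable $\X$ you give no mechanism at all.

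The missing idea is that in (ii) and (iii) no translation is needed. Let $\mathcal{C}'$ be the set of traces of positive radius. It is countable. In (ii), by \eqref{eq: GIB for ell1} a trace of positive radius meets $\inte B$, so these traces have pairwise disjoint non-empty interiors in $\Y$. In (iii), a star-finite family of bodies in the separable space $\Y$ is countable. Hence $\Y\setminus\bigcup\mathcal{C}'$ is a $G_\delta$ subset of $\Y$. It is covered by the singleton traces, so it has cardinality $<2^{\aleph_0}$, and the perfect set property of Borel sets forces it to be countable. Adding these countably many singletons to $\mathcal{C}'$ gives a countable non-overlapping (resp.\ star-$n$-finite) covering of $\Y$. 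Then \Cref{prop: finite dim} (with \Cref{lemma: octahedron} in (ii)) concludes. \Cref{fact: good intersection with balls} is needed only in (i), where your use of it is correct.

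Your fallback, choosing $x_0$ on a line and counting, is not proved as stated. The spheres live in the infinite-dimensional space $\X/\Y$, and a sphere can contain a whole segment of a line. So you need a direction $w\in\X/\Y$ parallel to no segment of $S_{\X/\Y}$. In $\ell_1(\kappa)/\ell_1^3\cong\ell_1(\kappa\setminus\{1,2,3\})$ such a $w$ exists, e.g.\ $w=e_4+\sqrt2\,e_5$. Then $t\mapsto\|z+tw\|_1$ is convex and piecewise affine with slopes in $\{\pm1\pm\sqrt2\}$, so each sphere meets each line parallel to $w$ in at most two points. There are therefore at most $2|\B|<2^{\aleph_0}$ bad parameters, and a surviving translate makes all traces non-degenerate and non-overlapping. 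Carried out, this would be a genuinely different proof of (ii), handling all of $\B$ at once and avoiding $\X'$, \Cref{fact: good intersection with balls} and the perfect set property.

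However, this route depends on the specific norm and fails in general. It is consistent that $\omega_1<2^{\aleph_0}$, and the paper's remark that $c_0(\omega_1)$ is a union of $\omega_1$ spheres then shows that fewer than $2^{\aleph_0}$ spheres can cover every line. So the counting step has no justification for a general $\X$ and cannot give (iii).

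Finally, in (iii) you do not need an $n$-dimensional $\Y_0$ ``spanned by coordinates''. That notion makes no sense in a general $\X$, and (GIB) need not pass to subspaces of $\Y$. A star-$n$-finite covering is star-$m$-finite for $m=\dim\Y\geq n$, so \Cref{prop: finite dim}\eqref{item: dim=n not star-n-finite} applies to $\Y$ directly.
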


As we will see in \Cref{rmk: noncomplete false}, the assumption of completeness cannot be omitted in \eqref{tiling sep}.

\begin{proof} As in the proof of \Cref{prop: finite dim}, the first part is common to all the clauses. Arguing by contradiction, we take a covering $\B$ of $\X$ consisting of balls of positive radius. Further, in \eqref{tiling sep} and \eqref{tiling ell1} we assume that $\B$ is a tiling and in \eqref{star-n-finite} that it is a star-$n$-finite covering. Moreover, in the case of $\ell_1(\kappa)$, the subspace $\Y$ is the subspace $\ell_1^3$, spanned by three vectors of the canonical basis; recall that, by \Cref{rmk: GIB}, $\Y$ has the (GIB) property. Let us denote
\[ \B'\coloneqq \{B\cap \Y\colon B\in\B,\ B\cap \Y\neq\emptyset \}, \]
which, by the (GIB) property, is a covering of $\Y$ with (possibly degenerate) balls.

To begin with, $\B'$ retains some properties of $\B$. In fact, it is clear that in \eqref{star-n-finite} $\B'$ is a star-$n$-finite covering. Likewise, we claim that in \eqref{tiling sep} and \eqref{tiling ell1} the family $\B'$ consists of non-overlapping balls. In fact, in \eqref{tiling sep}, as $\B$ is countable, we can apply \Cref{fact: good intersection with balls} and, up to translating each ball in $\B$ by the same vector, we assume without loss of generality that each ball $B$ in $\B$ that intersects $\Y$ satisfies $\Y\cap \inte B\neq \emptyset$; as a consequence, the balls in $\B'$ are non-overlapping. For \eqref{tiling ell1}, we argue similarly, but we replace \Cref{fact: good intersection with balls} with the stronger property \eqref{eq: GIB for ell1} of $\ell_1(\kappa)$ spaces. In particular, \eqref{eq: GIB for ell1} implies that, if $r(B\cap \Y)>0$, then $\Y\cap \inte B\neq \emptyset$; this yields that $\B'$ is non-overlapping.

We now decompose the family $\B'$ into non-degenerate and degenerate balls:
\[ \cal C'\coloneqq \{B\in\B'\colon r(B)>0\} \quad\text{and}\quad \cal D'\coloneqq \{B\in\B'\colon |B|=1\}. \]
Observe that $|\B|\leq\dens(\X)<2^{\aleph_0}$ and $|\cal C'|\leq \aleph_0$. In fact, in \eqref{tiling sep} and \eqref{tiling ell1} this is obvious as the sets in $\B$, resp.~$\cal C'$, have disjoint non-empty interiors. Instead, in \eqref{star-n-finite} it directly follows from \cite{DESOVEstar}*{Lemma~2.2}. Since $\cal C'$ is at most countable, we deduce that $\Y\setminus\bigcup \cal C'$ is a Borel subset (actually, $G_\delta$) of $\Y$. Further, $\Y\setminus\bigcup \cal C'$ is contained in $\bigcup \cal D'$, hence
\[ |\Y\setminus\bigcup \cal C'|\leq|\bigcup \cal D'|=|\cal D'|\leq|\B|<2^{\aleph_0}. \]

Recalling that Borel subsets of Polish spaces have the perfect-set property (see, \emph{e.g.}, \cite{Kechris}*{Theorem~13.6}), we conclude that $\Y\setminus\bigcup \cal C'$ must be countable. Hence, the family
\[ \B''\coloneqq \cal C'\cup\bigl\{\{y\};\, y\in \Y\setminus\bigcup \cal C'\bigr\} \]
is a countable covering of $\Y$ by balls.

We now divide the various cases. To begin with, in the case of \eqref{tiling sep}, $\B''$ is non-overlapping because $\B'$ was. Hence, by \Cref{prop: finite dim}\eqref{item: non-overlapping implies protectable} each point of $S_\Y$ is protectable, a contradiction. Similarly, in \eqref{tiling ell1}, we obtain that $\B''$ is non-overlapping and that every point of the unit sphere $S_\Y$ is protectable. However, this contradicts \Cref{lemma: octahedron}. Finally, in case \eqref{star-n-finite}, it is clear that $\B''$ is a star-$n$-finite covering of $\Y$. Since $\dim\Y \geq n$, this contradicts \Cref{prop: finite dim}\eqref{item: dim=n not star-n-finite} and concludes the proof.
\end{proof}

For the sake of completeness, we restate explicitly our main results in the case of $\ell_1(\kappa)$.
\begin{corollary} Let $\kappa$ be an infinite cardinal with $\kappa<2^{\aleph_0}$ and $n\in \N$. Then:
\begin{enumerate}
    \item $\ell_1(\kappa)$ does not admit tilings by balls;
    \item $\ell_1(\kappa)$ does not admit star-$n$-finite coverings by balls, each of positive radius.
\end{enumerate} 
\end{corollary}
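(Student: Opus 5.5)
The corollary is a direct specialisation of \Cref{thm: fd reduction}, so the plan is simply to check the hypotheses of its clauses for $\X=\ell_1(\kappa)$. First, since $\kappa$ is infinite, $\dens(\ell_1(\kappa))=\kappa<2^{\aleph_0}$, which is the standing assumption of \Cref{thm: fd reduction}.

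For the first clause, I would invoke \Cref{thm: fd reduction}\eqref{tiling ell1} directly. Its proof already takes $\Y=\ell_1^3$, spanned by three vectors of the canonical basis. This subspace has the (GIB) property by \Cref{rmk: GIB}. The required non-protectable points on $S_\Y$ come from \Cref{lemma: octahedron}, for instance a vertex of the octahedron. No further work is needed beyond noting that $\kappa\geq 3$, which holds since $\kappa$ is infinite.

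For the second clause, given $n\in\N$, I would fix a subset $I\subseteq\kappa$ with $|I|=n$ and set $\Y=\ell_1(I)$. This is an $n$-dimensional subspace with (GIB), again by \Cref{rmk: GIB}. Then \Cref{thm: fd reduction}\eqref{star-n-finite} excludes star-$n$-finite coverings of $\ell_1(\kappa)$ by balls of positive radius.

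There is no genuine obstacle: the only point to watch is that clause (ii) refers to balls of positive radius, which matches the hypothesis of \Cref{thm: fd reduction}\eqref{star-n-finite}. Since the corollary literally restates \Cref{mth: ell1}, this also completes the proof of that theorem.
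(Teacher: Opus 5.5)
Your proposal is correct and matches the paper. The corollary is stated without a separate proof, as a restatement of \Cref{thm: fd reduction}\eqref{tiling ell1} (which uses $\Y=\ell_1^3$ and \Cref{lemma: octahedron}) and of \Cref{thm: fd reduction}\eqref{star-n-finite} applied with an $n$-dimensional (GIB) subspace $\ell_1(I)$, $|I|=n$. You also correctly verified the standing hypothesis $\dens(\ell_1(\kappa))=\kappa<2^{\aleph_0}$.
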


\begin{remark}\label{rmk: star not star-n} As \Cref{thm: fd reduction}\eqref{star-n-finite} does not require completeness, it also follows that the space $\ell_{1,0}$ of finitely supported sequences does not admit star-$n$-finite coverings by balls of positive radius (for any $n\in \N$). On the other hand, this space has a countable (algebraic) basis, so it admits a star-finite covering, by \cite{DESOVEstar}*{Theorem~2.12}. In particular, the property of being star-finite is weaker than being star-$n$-finite for some $n\in \N$. Moreover, the proof of \Cref{thm: fd reduction}\eqref{tiling ell1} is also independent of completeness, hence it applies to $\ell_{1,0}(\kappa)$ as well ($\kappa< 2^{\aleph_0}$). Thus, it also holds that, when $\kappa< 2^{\aleph_0}$, $\ell_{1,0}(\kappa)$ does not admit tilings by closed balls.
\end{remark}

\section{Tilings in \texorpdfstring{$c_0$}{c0}-like spaces}\label{sec: c0}
In this section we study the existence of tilings by balls in spaces of the form $\X\oplus_\infty c_0$ and $\X\oplus_\infty c_{00}$. Among our motivations is to show that the completeness assumption in \Cref{thm: fd reduction}\eqref{tiling sep} cannot be dropped (\Cref{rmk: noncomplete false}). Since in this section we don't need to consider degenerate balls, throughout this section by ball we always mean a (closed) ball of positive radius. 

Before the main result, we begin with a characterisation of those separable normed spaces $\X$ for which $\X\oplus_\infty c_0$ admits a tiling by balls.

\begin{proposition} Given a separable normed space $\X$, the space $\X\oplus_\infty c_0$ admits a tiling by balls if and only if $\X$ does.
\end{proposition}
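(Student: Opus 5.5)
We prove the two implications separately. For ``if'' we take products of a tiling of $\X$ with explicit tilings of $c_0$ by cubes. For ``only if'' we intersect a tiling of $\X\oplus_\infty c_0$ with the subspace $\X\oplus\{0\}$, after a translation supplied by \Cref{fact: good intersection with balls}. Throughout, we use that in $\X\oplus_\infty c_0$ a ball is a product,
\[ B\bigl((x,z),r\bigr)= B_\X(x,r)\times B_{c_0}(z,r), \]
and that its interior is $U$-type on both factors, namely $\inte B((x,z),r) = (x+rU_\X)\times(z+rU_{c_0})$.

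\emph{($\Leftarrow$)} Let $\cal T$ be a tiling of $\X$ by balls. For each $r>0$ we first tile $c_0$ by balls of radius $r$ centred at the points of $2r\ZZ^{(\N)}$, the finitely supported sequences with entries in $2r\ZZ$.
\begin{itemize}
\item \emph{Covering.} Given $w\in c_0$, round each coordinate $w_i$ to a nearest point of $2r\ZZ$. Since $w_i\to 0$, we eventually have $|w_i|<r$, so all but finitely many coordinates round to $0$. The resulting centre $z$ therefore lies in $2r\ZZ^{(\N)}$ and satisfies $\|w-z\|_\infty\le r$.
\item \emph{Non-overlapping.} If $z\ne z'$ are two such centres, then $|z_i-z'_i|\ge 2r$ for some $i$. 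Hence the open balls $z+rU_{c_0}$ and $z'+rU_{c_0}$ are disjoint.
\end{itemize}
We then set
\[ \cal T'\coloneqq\bigl\{B_\X(x,r)\times B_{c_0}(z,r)\colon B_\X(x,r)\in\cal T,\ z\in 2r\ZZ^{(\N)}\bigr\}. \]
This is a family of balls in $\X\oplus_\infty c_0$. It covers the space: given $(y,w)$, first pick the tile of $\cal T$ containing $y$, with radius $r$, and then the cube of radius $r$ containing $w$. It is non-overlapping because the interior of a product is the product of the interiors. Concretely, two distinct members either come from distinct tiles of $\cal T$, whose interiors are disjoint, or come from the same tile and distinct centres $z\neq z'$, whose open cubes are disjoint.

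\emph{($\Rightarrow$)} Let $\B$ be a tiling of $\X\oplus_\infty c_0$ by balls. Since $\X$ is separable, so is $\X\oplus_\infty c_0$; as the members of $\B$ have disjoint non-empty interiors, $\B$ is countable. Let $\Y\coloneqq\X\oplus\{0\}$. This is a proper closed subspace, and the quotient $(\X\oplus_\infty c_0)/\Y$ is isometric to $c_0$, which is complete. Note that completeness of $\X$ is not needed here.

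By \Cref{fact: good intersection with balls}, after translating all of $\B$ by a common vector, we may assume that every $B\in\B$ meeting $\Y$ satisfies $\inte B\cap\Y\ne\emptyset$. We now compute these intersections explicitly. For $B=B((x,z),r)$ we have
\[ B\cap\Y=B_\X(x,r)\times\{0\}\ \text{ if } \|z\|\le r \quad (\text{and } B\cap\Y=\emptyset \text{ otherwise}), \qquad \inte B\cap\Y\ne\emptyset\iff\|z\|<r. \]
So every tile meeting $\Y$ meets it in a ball of $\X$ of positive radius.
\begin{itemize}
\item These balls cover $\X$, since $\B$ covers $\X\oplus_\infty c_0$.
\item Their interiors in $\X$ are pairwise disjoint: when $\|z\|<r$, the set $(x+rU_\X)\times\{0\}$ is contained in $\inte B$, and the interiors of distinct tiles in $\B$ are disjoint.
\end{itemize}
Hence $\{B\cap\Y\colon B\in\B,\ B\cap\Y\ne\emptyset\}$ is a tiling of $\X$ by balls.

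The only delicate point is making the restricted balls non-degenerate and non-overlapping. Without the translation, a tile could touch $\Y$ only along its boundary, producing a degenerate or overlapping piece. This is handled by \Cref{fact: good intersection with balls}, which applies precisely because $\B$ is countable and the quotient $c_0$ is complete.
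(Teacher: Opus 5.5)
Your proof is correct and follows essentially the same route as the paper. For ``if'' you take products of the tiles of $\X$ with the tiling of $c_0$ by cubes of the same radius centred at $2r\ZZ$-valued finitely supported sequences, and for ``only if'' you translate using \Cref{fact: good intersection with balls} (justified by countability of $\B$ and completeness of the quotient $c_0$) before intersecting with $\X\oplus\{0\}$; you also write out checks the paper leaves implicit, such as the explicit descriptions of $B\cap\Y$ and $\inte B\cap\Y$.
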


\begin{proof} To begin with, suppose that $\B= \{B_\X(x_k,r_k)\}_{k=1}^\infty$, $x_k\in \X$, is a tiling of $\X$ by balls. Let $(z_j)_{j=1}^\infty$ be an injective enumeration of all sequences in $c_0$ that are $2\ZZ$-valued. Then
\[ \B'\coloneqq \{B\big( (x_k, r_k z_j), r_k\big) \}_{j,k=1}^\infty \]
is a tiling by balls of $\X\oplus_\infty c_0$. We first prove that these balls don't overlap. For this take indices $j,k,j',k'\in \N$ with $(j,k)\neq (j',k')$. Then, we have
\[ \|(x_k, r_k z_j)- (x_{k'}, r_{k'} z_{j'})\|= \max\{ \|x_k-x_{k'}\|_\X, \|r_k z_j- r_{k'} z_{j'}\|_{c_0} \}. \]
If $k\neq k'$, this quantity is at least $\|x_k-x_{k'}\|_\X \geq r_k+ r_{k'}$ (as $\B$ is non-overlapping). Otherwise, if $k= k'$, the above quantity equals $r_k \|z_j- z_{j'}\|_{c_0} \geq 2r_k$. Therefore, $\B'$ is non-overlapping. Next, we prove that $\B'$ is a covering of $\X\oplus_\infty c_0$. Take any $(x,y)\in \X\oplus_\infty c_0$ and find $k\in \N$ such that $x\in B_\X(x_k,r_k)$. Moreover, find $j\in \N$ such that $\frac{1}{r_k}y\in B_{c_0}(z_j,1)$, namely $y\in B_{c_0}(r_k z_j, r_k)$. Then, $(x,y)\in B\big((x_k, r_k z_j), r_k\big)$, and $\B$ is a tiling.

For the converse implication, suppose that $\B$ is a tiling by balls of $\X \oplus_\infty c_0$. By \Cref{fact: good intersection with balls}, up to translating each ball in $\B$ by the same vector, we can suppose that each $B\in \B$ that intersects $\X$ is such that $\X\cap \inte B\neq\emptyset$. Thus the family
\[ \B'\coloneqq \{B\cap \X\colon B\in \B\} \]
is a tiling of $\X$ by balls (note that $\X$ has the (GIB) property in $\X\oplus_\infty c_0$).
\end{proof}

We now move to the main result, which shows that $\X\oplus_\infty c_{00}$ can admit a tiling by balls even when $\X$ doesn't. The rough idea of the proof (present in the proof of \Cref{lemma: induction}) is to take balls in $\X$ and see them as the intersection of $\X$ with non-overlapping balls of $\X\oplus_\infty c_{00}$. For the easiest instance of this, given balls $B_\X(x_1,r_1)$ and $B_\X(x_2,r_2)$ in $\X$, they are the intersection of $\X$ with the balls $B_{\X\oplus_\infty \R}((x_1,r_1),r_1)$ and $B_{\X\oplus_\infty \R}((x_2,r_2),r_2)$ of $\X\oplus_\infty \R$ and these latter balls do not overlap. In turn, the balls in $\X$ will be chosen via the following lemma, from \cite{DESOVEstar}.

\begin{lemma}[\cite{DESOVEstar}*{Lemma~2.11}]\label{lemma: DESOVE} Let $\Z$ be a finite-dimensional subspace of a normed space $\Y$ and $C$ be a closed subset of $\Z$. Then there exists a family $\F$ of balls of $\Y$ such that:
\begin{enumerate}[label={\rm (\alph*)},ref=\alph*]
    \item\label{i: star} $\F$ is star-finite;
    \item\label{i: F disjoint} $c(B)\in \Z$ and $B\cap C=\emptyset$ for each $B\in\F$;
    \item\label{i: F covers} $\Z\setminus C\subseteq \bigcup\F$;
    \item\label{i: singular} the singular points of $\F$ are contained in $C$.
\end{enumerate} 
\end{lemma}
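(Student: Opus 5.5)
We construct $\F$ by a Whitney-type decomposition of the open set $\Z\setminus C$ (open in $\Z$), using balls of $\Y$ centred in $\Z$ whose radii are proportional to the distance from $C$. Write $d(z)\coloneqq\dist(z,C)$ for $z\in\Z$, with the convention $d\equiv\infty$ if $C=\emptyset$; in that case we simply replace $d$ by the constant $1$ below. For $k\in\ZZ$ consider the shells
\[ A_k\coloneqq\{z\in \Z\colon 2^{k}\leq d(z)\leq 2^{k+1}\}, \]
so that $\Z\setminus C=\bigcup_{k\in\ZZ}A_k$.

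Since $\Z$ is finite-dimensional, each bounded part of $A_k$ is compact. Fix a maximal $2^{k-3}$-separated subset $N_k$ of $A_k$. Local finiteness in bounded regions follows from finite dimensionality: $N_k$ is finite on every bounded set. Put
\[ \F\coloneqq\{B_\Y(z,2^{k-2})\colon k\in\ZZ,\ z\in N_k\}. \]

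We now verify the four properties.

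\eqref{i: F disjoint}: Every centre lies in $\Z$. For $z\in N_k$, any point $y\in B_\Y(z,2^{k-2})$ satisfies
\[ \dist(y,C)\geq d(z)-2^{k-2}\geq 2^{k}-2^{k-2}>0, \]
since $C\subseteq\Z$ and distances in $\Y$ to $C$ dominate those measured from $z$. Hence $B\cap C=\emptyset$.

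\eqref{i: F covers}: Every $z\in\Z\setminus C$ lies in some $A_k$, so by maximality of $N_k$ it lies within $2^{k-3}<2^{k-2}$ of a point of $N_k$.

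\eqref{i: star}: Suppose $B_\Y(z,2^{k-2})$ meets $B_\Y(w,2^{j-2})$, with $z\in N_k$ and $w\in N_j$. Then $\|z-w\|\leq 2^{k-2}+2^{j-2}$. Since $d$ is $1$-Lipschitz,
\[ 2^{j}\leq d(w)\leq 2^{k+1}+2^{k-2}+2^{j-2}, \]
which forces $|j-k|\leq 2$. Thus $w$ lies in a fixed bounded region of $\Z$ and belongs to one of at most five sets $N_j$. Each of these is finite on bounded sets, so only finitely many balls meet a given ball.

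\eqref{i: singular}: Let $x\in\Y$ be a point not in $C$. If $x\notin\overline{\Z}=\Z$, a small neighbourhood of $x$ only meets balls of radius at least comparable to $\dist(x,\Z)$ up to scale. More cleanly, we argue as follows. Suppose infinitely many balls of $\F$ meet every neighbourhood of $x$. By the finiteness on bounded sets of each $N_k$, their indices $k$ must be unbounded. Large $k$ is impossible near a fixed point, since $d(w)\geq 2^{k}$ and $\|x-w\|\leq 2^{k-2}+\text{small}$ cannot both hold for large $k$ while $\dist(x,C)<\infty$; so $k\to-\infty$ along the sequence. Then the centres $w$ satisfy $d(w)\leq 2^{k+1}\to0$ and $\|w-x\|\to0$, hence $\dist(x,C)=0$ and $x\in C$, because $C$ is closed. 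This shows every point outside $C$ is regular.

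The main obstacle I expect is the bookkeeping in \eqref{i: star} and \eqref{i: singular}. Both reduce to two facts: comparability of radii for intersecting balls, which comes from the Lipschitz bound on $d$, and finiteness of each $N_k$ on bounded sets, which needs $\dim\Z<\infty$. I would check the constants carefully there.
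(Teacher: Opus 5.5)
The paper gives no proof of this lemma. It is quoted from \cite{DESOVEstar}*{Lemma~2.11} and used only as a black box in \Cref{lemma: induction}. Your argument therefore has to be judged on its own, and it is correct.

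Properties (b) and (c) follow directly from $d\geq 2^k$ on $A_k$ and the maximality of $N_k$. For (a) and (d), the two ingredients you single out are exactly what is needed: the $1$-Lipschitz bound on $d$, which makes the radii of intersecting balls comparable, and the fact that a separated subset of the finite-dimensional space $\Z$ is finite on bounded sets.

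Your proof of (d) also covers points $x\in\Y\setminus\Z$, since nothing in it uses $x\in\Z$. At the end you need $C$ closed in $\Y$, not just in $\Z$. This holds because $\Z$ is finite-dimensional and hence closed in $\Y$.

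Compared with the bare citation, your Whitney-type construction makes the paper self-contained and gives extra information for free. First, $\F$ is visibly countable, since each $N_k$ is countable; so the appeal to \cite{DESOVEstar}*{Lemma~2.2} in the proof of \Cref{lemma: induction} becomes unnecessary. Second, your argument for (d) shows directly that distinct balls of $\F$ accumulating at a point have radii tending to $0$. That is precisely the fact used to prove property (5) of \Cref{lemma: induction}.

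Three small cleanups:
\begin{itemize}
\item In (a), your inequality gives $\tfrac{3}{4}\,2^{j}\leq \tfrac{9}{4}\,2^{k}$, i.e.\ $j\leq k+1$, and symmetrically $k\leq j+1$. So in fact $|j-k|\leq 1$; your bound $|j-k|\leq 2$ is still enough.
\item In (d), the sentence beginning ``If $x\notin\overline{\Z}$'' is vague and superfluous, because the argument that follows handles every $x\in\Y\setminus C$ at once. Delete it.
\item When $C=\emptyset$, the step ``large $k$ is impossible'' uses $\dist(x,C)<\infty$, which fails. However, only the indices $k\in\{-1,0\}$ occur in that case, so bounded indices already give regularity of every point. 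Alternatively, use a single maximal $\tfrac18$-separated net with balls of radius $\tfrac14$.
\end{itemize}
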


We now give the main lemma. Here and in \Cref{thm: Y+c00} we indicate by $\pi_\Y$ the canonical projection from $\Y\oplus_\infty c_{00}$ onto $\Y$.

\begin{lemma}\label{lemma: induction} Let $\Z$ be a finite-dimensional subspace of a normed space $\Y$ and denote $\X\coloneqq \Y\oplus_\infty c_{00}$. Also, let $\C$ be a family of balls in $\X$ with the following properties:
\begin{enumerate}
    \item\label{i: HP closed} the set $\bigcup \C$ is closed in $\X$;
    \item\label{i: HP centre in Z} $\pi_\Y(c(B))\in \Z$ for each $B\in \C$.
\end{enumerate}
 Then, there exists a family $\B$ of balls of $\X$ such that:
\begin{enumerate}[label={\rm (\arabic*)},ref=\arabic*]
    \item\label{i: non-overlap} $\B$ is star-finite and non-overlapping;
    \item\label{i: centre in Z} $\pi_\Y(c(B))\in \Z$ for each $B\in \B$;
    \item\label{i: cover Z} $\Z\subseteq \bigcup (\B\cup\C)$;
    \item\label{i: disjoint from C} $B\cap \bigcup\C=\emptyset$, whenever $B\in\B$;
    \item\label{i: U closed} the set $\bigcup (\B\cup\C)$ is a closed set in $\X$.
\end{enumerate}
\end{lemma}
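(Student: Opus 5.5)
The plan is to apply \Cref{lemma: DESOVE} inside $\Z$, with closed set $C\coloneqq \Z\cap\bigcup\C$. This set is closed by \eqref{i: HP closed}. The lemma yields a star-finite family $\F$ of balls of $\Y$ with the following properties: centres in $\Z$; each ball disjoint from $C$; $\Z\setminus C$ covered; and singular points contained in $C$. The balls of $\F$ may overlap, so I will lift them to non-overlapping balls of $\X$, following the idea sketched before \Cref{lemma: DESOVE}. Since $\F$ is star-finite, each $F\in\F$ meets only finitely many others. Star-finite families of balls of positive radius in a separable space are countable (\cite{DESOVEstar}*{Lemma~2.2}), so I enumerate $\F=\{B_\Y(y_k,r_k)\}_{k\in\N}$. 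I then choose a proper colouring of the intersection graph using fresh coordinates of $c_{00}$. For each $k$, I pick a vector $z_k\in c_{00}$ with $\|z_k\|_\infty=r_k$ and then set
\[ B_k\coloneqq B_\X\bigl((y_k,z_k),r_k\bigr). \]
Then $B_k\cap\Y\supseteq B_\Y(y_k,r_k)$, because $\|z_k\|=r_k$. For intersecting pairs $k\neq k'$ I require $\|z_k-z_{k'}\|_\infty\geq r_k+r_{k'}$. This is arranged by assigning each $k$ its own coordinate $e_{m_k}$ and letting $z_k$ have entries $\pm r_k$ or $0$ as follows: on coordinate $m_k$ it is $+r_k$, and on coordinate $m_{k'}$ for each earlier neighbour $k'<k$ it is $-r_k$. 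Then $|z_k(m_{k'})-z_{k'}(m_{k'})|=r_k+r_{k'}$. Each $z_k$ is finitely supported precisely because $\F$ is star-finite. For non-intersecting pairs the $\Y$-components already give distance $>r_k+r_{k'}$. So $\B\coloneqq\{B_k\}$ is non-overlapping, and property \eqref{i: centre in Z} holds.

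It remains to guarantee \eqref{i: disjoint from C}, \eqref{i: non-overlap} (star-finiteness in $\X$) and \eqref{i: U closed}. Shrinking is not allowed, since the $\Y$-slices must still cover $\Z\setminus C$. The worry is that a lifted ball $B_k$ is fatter than $F_k$ in the $c_{00}$ directions and could hit a ball of $\C$ off $\Y$. To handle this, I will make the new coordinates $m_k$ lie outside the finite supports relevant to the balls of $\C$ near $F_k$. The key is the following comparison. For any $B\in\C$ with centre $(u,w)$ and radius $s$, and any point $(y,v)\in B_k$ with $v$ supported on new coordinates, I have $\|(y,v)-(u,w)\|\geq \|y-u\|$. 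Since $u\in\Z$, \eqref{i: HP centre in Z} gives that $B\cap\Z$ is a ball $B_\Z(u,s-\|w\|)$ when $\|w\|\le s$. Because $F_k\cap C=\emptyset$, I get $\|y_k-u\|>r_k+s-\|w\|$. This alone does not give disjointness when $\|w\|>0$. So I will instead also put the fresh coordinate of $z_k$ large relative to the shifted centre, or more simply translate $z_k$ by $-w$ on the support of $w$. This needs care when infinitely many $B\in\C$ are involved.

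I expect this interaction with $\C$ to be the main obstacle, together with closedness \eqref{i: U closed}. For closedness I will use two facts: singular points of $\F$ lie in $C\subseteq\bigcup\C$, and $\F$ is locally finite off $C$. If a limit of points of $\bigcup\B$ is not in $\bigcup\C$, its $\Y$-projection eventually stays in a region where only finitely many $F_k$ live. The lifted balls there are finitely many closed sets, so the limit lies in $\bigcup\B$. The case where the projections accumulate to $C$ must be shown to put the limit point in $\bigcup\C$, using closedness of $\bigcup\C$ and the radii $r_k\to0$ near $C$. I would first try to prove this with the simplest choice of $z_k$ above. If the $\C$-interaction fails, I would replace $C$ in \Cref{lemma: DESOVE} by a closed neighbourhood-type set inside $\Z$, built from the projections $\pi_\Y(B)$ of the balls $B\in\C$, and cover the thin remainder separately.
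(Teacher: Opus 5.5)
Your set-up coincides with the paper's. You use the same closed set $C=\Z\cap\bigcup\C$, the same application of \Cref{lemma: DESOVE}, and essentially the same lift $B_k=B_\X((y_k,z_k),r_k)$ with $\|z_k\|_\infty=r_k$ and a $\pm r_k$ sign pattern. The paper simply takes $z_k=r_k(e_1+\dots+e_{k-1})-r_ke_k$ against \emph{all} earlier indices, so no neighbour bookkeeping is needed.

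However, the proposal is not yet a proof. It stalls at (4), and it stalls because of a computational error. In $\Y\oplus_\infty c_{00}$ one has $B_\X((u,w),s)\cap\Z=B_\Z(u,s)$ if $\|w\|_\infty\le s$, and $\emptyset$ otherwise. The radius $s-\|w\|$ you wrote is the $\ell_1$-sum formula \eqref{eq: GIB for ell1}. With the correct formula, $B_\Z(u,s)\subseteq C$ and $F_k\cap C=\emptyset$ (both centres lie in $\Z$) give $\|y_k-u\|>r_k+s$, hence
\[ \nn{(y_k,z_k)-(u,w)}\geq\|y_k-u\|>r_k+s. \]
So $B_k\cap B=\emptyset$ for \emph{every} choice of $z_k$. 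This one line is the paper's argument; fresh coordinates, translations by $-w$, or enlarging $C$ are unnecessary.

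Your unease does point to a genuine subtlety, but a different one. If $\|w\|_\infty>s$, then $B$ misses $\Y$, leaves no trace in $C$, and nothing prevents a lift from meeting it. The paper's proof tacitly excludes this case, since it uses $B\cap\Z=B_\Z(c,r)$. This is harmless for the application in \Cref{thm: Y+c00}: there every ball of $\C$ has a centre whose $c_{00}$-component has norm at most its radius. The right fix is to add this as a hypothesis, not to redesign the lift.

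Your sketch of (5) also omits the two points that make it work. Let $x=(y,w)\in\overline{\bigcup\B}\setminus\bigcup\B$. Then there are infinitely many distinct $B_{k_j}$ with $\dist(x,B_{k_j})\to0$. Since $\pi_\Y(B_k)=F_k$, the point $y$ is singular for $\F$, so $y\in C$; only this case matters.

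First, $r_{k_j}\to0$ is not supplied by \Cref{lemma: DESOVE}, so you must prove it. For example, as the paper does: use star-finiteness to pass to pairwise disjoint $F_{k_j}$, then apply the packing argument of \Cref{claim: radius to 0} to the balls $B_\Z(y_{k_j},r_{k_j})$ in the finite-dimensional $\Z$.

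Second, and this is the crux: $\nn{(y_{k_j},z_{k_j})-x}\to0$ together with $\|z_{k_j}\|_\infty=r_{k_j}\to0$ forces $w=0$, so $x=(y,0)\in\bigcup\C$. Knowing only $y\in C$ says nothing about $(y,w)$. Closedness of $\bigcup\C$ serves only to reduce (5) to the inclusion $\overline{\bigcup\B}\setminus\bigcup\B\subseteq\bigcup\C$.

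Two smaller points remain. Star-finiteness of $\B$, which you list as outstanding, is immediate: intersecting lifts have intersecting projections $F_k$. Countability of $\F$ should come from the traces $F\cap\Z$ in the finite-dimensional $\Z$, as in the paper, since $\Y$ is not assumed separable.
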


\begin{proof} Let us denote by $\n$ the norm of $\Y$, by $\n_\infty$ the canonical norm of $c_{00}$, and by $\nn\cdot$ the norm of $\X$. We also consider $\Y$ and $\Z$ as subspaces of $\X$, by identifying them with $\Y\times\{0\}$ and $\Z\times\{0\}$, respectively. By \eqref{i: HP closed}, the set $C\coloneqq \Z\cap \bigcup \C$ is a closed subset of $\Z$, thus an application of \Cref{lemma: DESOVE} gives us a family $\F$ of balls in $\Y$ that satisfy \eqref{i: star}--\eqref{i: singular}. Since in particular $c(B)\in\Z$ for each $B\in \F$, we see that $\{B\cap\Z\colon B\in \F\}$ is a star-finite collection of balls in $\Z$, thus $\F$ is at most countable (by \cite{DESOVEstar}*{Lemma 2.2}). Write
\[ \F=\{B_\Y(y_k,r_k)\}_{k=1}^\infty, \]
where $y_k\in \Z$ and $r_k>0$. We can now define the family $\B$. For each $k\in \N$, set
\[ x_k\coloneqq (y_k,r_ke_1+ \dots+ r_ke_{k-1} - r_ke_k)\in\X \]
and consider the family $\B$ of balls given by
\[ \B\coloneqq \{B_\X(x_k,r_k)\}_{k=1}^\infty. \]
Since $y_k\in \Z$, the validity of \eqref{i: centre in Z} is clear by construction. Moreover, when $k<n$,
\[ \nn{x_n- x_k}\geq r_n+ r_k, \]
as is seen by checking the coefficients of $e_k$; thus, $\B$ is non-overlapping. Next, the fact that $\|r_ke_1+ \dots+ r_ke_{k-1} - r_ke_k\|_\infty = r_k$ implies the equality
\begin{equation}\label{eq: ball in X and Y}
    B_\X(x_k,r_k)\cap \Y= B_\Y(y_k,r_k).
\end{equation}
This and property \eqref{i: F covers} of $\F$ imply \eqref{i: cover Z}.

We next prove that $\B$ is star-finite (thus obtaining \eqref{i: non-overlap}). If the balls $B_\X(x_k,r_k)$ and $B_\X(x_n,r_n)$ in $\B$ have non-empty intersection, then $\nn{x_n- x_k}\leq r_n+ r_k$, whence $\|y_n- y_k\|\leq r_n+ r_k$ as well. Thus, the corresponding balls $B_\Y(y_k,r_k)$ and $B_\Y(y_n,r_n)$ in $\F$ intersect as well. As $\F$ is star-finite, this implies the same for $\B$.

A similar argument also proves \eqref{i: disjoint from C}. Suppose by contradiction that a ball $B\in \C$ intersects some $B_\X(x_k,r_k)$. Let $r$ be the radius of $B$ and $(c,v)\in \Y\oplus_\infty c_{00}$ its centre. Notice that, by \eqref{i: HP centre in Z}, $c\in \Z$. Our assumption gives $\nn{(c,v)-x_k}\leq r+ r_k$, whence $\|c-y_k\|\leq r+ r_k$. Thus, $B_\Z(c,r)$ intersects $B_\Z(y_k,r_k)$. Further, $B_\Z(c,r)= B\cap \Z \subseteq C$, while $B_\Z(y_k,r_k)\subseteq B_\Y(y_k,r_k)\subseteq \bigcup\F$, contradicting property \eqref{i: F disjoint} of $\F$.

It remains to prove \eqref{i: U closed}. For this, by \eqref{i: HP closed} it is enough to prove that $\overline{\bigcup\B}^\X \setminus \bigcup\B\subseteq \bigcup\C$. Thus, let $x=(y,w)\in \overline{\bigcup\B}^\X \setminus \bigcup\B$ and let $(B_j)_{j=1}^\infty\subseteq \B$ be an injective sequence such that $\dist_\X(x,B_j)\to 0$. Let $B_j= B_\X(x_{k_j}, r_{k_j})$. As $\B$ is star-finite, up to passing to a subsequence we can assume that the balls $B_j$ are mutually disjoint. Thus, an easy argument (as in the proof of \Cref{claim: radius to 0}) applied to the balls $B_\Z(y_{k_j}, r_{k_j})$ shows that $r_{k_j}\to 0$. Therefore
\[ \nn{x_{k_j}- (y,w)}\to 0. \]
However, by construction, $x_{k_j}=(y_{k_j},w_{k_j})$, where $\|w_{k_j}\|_\infty= r_{k_j}$. As a consequence, $y_{k_j}\to y$ and $w=0$. The first property yields that each neighbourhood of $y$ intersects infinitely members of $\F$, thus $y$ is a singular point for $\F$. By \eqref{i: singular}, it then follows that $y\in C$. Therefore, $x=(y,0)\in \bigcup\C$, and we are done.
\end{proof}

Finally, we move to the main result on this section. 

\begin{theorem}\label{thm: Y+c00} Let $\Y$ be a normed space of at most countable dimension. Then the normed space $\X\coloneqq \Y\oplus_\infty c_{00}$ has a star-finite tiling by balls.
\end{theorem}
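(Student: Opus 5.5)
Since $\Y$ has at most countable dimension, so does $\X=\Y\oplus_\infty c_{00}$. The plan is to exhaust $\X$ by an increasing sequence of finite-dimensional subspaces and apply \Cref{lemma: induction} inductively, once per subspace. The subtlety is that \Cref{lemma: induction} only covers subspaces $\Z$ of the $\Y$-component. We handle this by re-splitting $\X$ at each step: for $m\in\N$ write
\[ \X=\Y_m\oplus_\infty c_{00}, \qquad \Y_m\coloneqq \Y\oplus_\infty \spn\{e_1,\dots,e_m\}, \]
using the natural isometric identification that shifts the remaining coordinates. Fix an algebraic basis $(u_j)$ of $\Y$ (finite or countable). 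Set $\Z_m\coloneqq \spn\{u_1,\dots,u_m\}\oplus\spn\{e_1,\dots,e_m\}\subseteq \Y_m$. Then $\bigcup_m \Z_m=\X$.

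We build families $\B_1\subseteq \B_1\cup\B_2\subseteq\cdots$ inductively. Put $\C_0=\emptyset$. Given $\C_{m-1}=\B_1\cup\dots\cup\B_{m-1}$, we apply \Cref{lemma: induction} in the decomposition $\Y_m\oplus_\infty c_{00}$, with $\Z=\Z_m$ and $\C=\C_{m-1}$. This produces $\B_m$, and we set $\C_m\coloneqq\C_{m-1}\cup\B_m$. We must check the hypotheses at each step.
\begin{itemize}
\item Closedness of $\bigcup\C_{m-1}$ is conclusion \eqref{i: U closed} of the previous step, and it does not depend on the splitting.
\item For the centre condition, a centre $c(B)$ with $B\in\B_i$, $i<m$, has $\pi_{\Y_i}(c(B))\in\Z_i$. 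Hence its $\Y$-part lies in $\spn\{u_1,\dots,u_i\}$, but its $c_{00}$-part may have arbitrarily long support, because the construction in \Cref{lemma: induction} uses $e_1,\dots,e_k$ for the $k$-th ball. So $\pi_{\Y_m}(c(B))$ need not lie in $\Z_m$.
\end{itemize}

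The centre condition is the main obstacle. I would first check which property of $\C$ the proof of \Cref{lemma: induction} actually uses. It is used only in proving \eqref{i: disjoint from C}, and there only to deduce that $B\cap\Z$ is the ball $B_\Z(\pi(c),r)$ and that $\nn{(c,v)-x_k}\le r+r_k$ forces the $\Z$-balls to meet. Both remain true if the $\Y_m$-projection of each centre is merely required to lie in the finite-dimensional space $\Z'\supseteq\Z_m$ spanned by $\Z_m$ and the finitely many coordinates actually involved. So I would use the weakened hypothesis ``$\pi_\Y(c(B))$ lies in a fixed finite-dimensional $\Z'\supseteq \Z$'' and apply the Lemma with $\Z'$ in place of $\Z$.

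If that fails, the fallback is to ensure that only finitely many coordinates are involved. We replace the stacking vectors $r_ke_1+\dots-r_ke_k$ by vectors supported on fresh coordinates beyond those used so far, via the choice of splitting. Then we choose $\Z_m$ to contain the $\Y_m$-projections of all earlier centres, reducing first to the finitely many earlier balls that meet a given bounded region. The conclusions of the Lemma (disjointness from $\bigcup\C$, and the fact that the restriction to $\Z$ of the new balls is a star-finite family in $\Z$) are what make this bookkeeping possible.

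Finally, let $\B\coloneqq\bigcup_m\B_m$. It is a covering, since $\Z_m\subseteq\bigcup\C_m$ by \eqref{i: cover Z}. It is non-overlapping, since each $\B_m$ is non-overlapping and, by \eqref{i: disjoint from C}, its balls are disjoint from all earlier balls. It is star-finite: each $\B_m$ is star-finite, and a ball of $\B_m$ meets no ball of any $\B_i$, $i\neq m$, again by \eqref{i: disjoint from C} applied at the later of the two steps. Hence $\B$ is a star-finite tiling by balls of $\X$.
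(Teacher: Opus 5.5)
Your construction is the paper's proof: the same re-splitting $\X=\Y_m\oplus_\infty c_{00}$ with $\Y_m=\Y\oplus_\infty\spn\{e_1,\dots,e_m\}$, the same $\Z_m=\spn\{u_1,\dots,u_m\}\oplus\spn\{e_1,\dots,e_m\}$, the same application of \Cref{lemma: induction} with $\C=\B_1\cup\dots\cup\B_{m-1}$, and the same assembly $\B=\bigcup_m\B_m$. What is wrong is the step you call the main obstacle. It is not an obstacle: hypothesis \eqref{i: HP centre in Z} of \Cref{lemma: induction} holds exactly as stated.

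Take $B\in\B_i$ with $i<m$. By \eqref{i: centre in Z} at step $i$, the $\Y$-part of $c(B)$ lies in $\spn\{u_1,\dots,u_i\}$. Its remaining coordinates split into two groups.
\begin{itemize}
\item The coordinates along $e_1,\dots,e_m$ lie in $\spn\{e_1,\dots,e_m\}\subseteq\Z_m$.
\item The coordinates along $e_{m+1},e_{m+2},\dots$ belong to the $c_{00}$-factor of the splitting at step $m$, so $\pi_{\Y_m}$ simply discards them.
\end{itemize}
Hence $\pi_{\Y_m}(c(B))\in\spn\{u_1,\dots,u_i\}\oplus\spn\{e_1,\dots,e_m\}\subseteq\Z_m$, however long the support of the stacking vector is. Equivalently, $\pi_{\Y_m}(c(B))\in\Z_m$ holds if and only if $\pi_\Y(c(B))\in\spn\{u_1,\dots,u_m\}$. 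This is the equivalence the paper records in $(\mathrm P^2_n)$, and it is the only thing the paper checks in the inductive step. You had already written down the needed fact, that the $\Y$-part lies in $\spn\{u_1,\dots,u_i\}$, and then drew the wrong conclusion from it.

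So replace your workaround paragraph with this one-line check rather than relying on it. The workaround would also not have worked if the far coordinates really mattered. The family $\C_{m-1}$ is infinite and its stacking vectors in general use unboundedly many coordinates. So there is no finite-dimensional $\Z'$ spanned by ``the finitely many coordinates actually involved''. Finite-dimensionality of $\Z$ is essential in \Cref{lemma: induction}: it is needed to invoke \Cref{lemma: DESOVE} and in the radius argument for \eqref{i: U closed}. The fresh-coordinate fallback is likewise not well defined as sketched.

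With that change your proof is complete and coincides with the paper's. Your final verification of covering, non-overlap and star-finiteness via \eqref{i: cover Z} and \eqref{i: disjoint from C} matches the paper's use of $(\mathrm P^1_n)$, $(\mathrm P^3_n)$ and $(\mathrm P^4_n)$.
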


\begin{remark}\label{rmk: noncomplete false} Taking $\Y$ such that $S_\Y$ admits a non-protectable point (say $\Y= \ell_2^n$ or $\Y= \ell_1^3$) shows that the assumption of completeness cannot be dispensed with in \Cref{thm: fd reduction}\eqref{tiling sep}, as $\Y$ has the (GIB) property in $\X= \Y\oplus_\infty c_{00}$.
\end{remark}

\begin{proof} Write $\Y= \spn\{y_n\}_{n=1}^\infty$ and, for $n\in\N$, define the subspaces $\Z_n\coloneqq \spn\{y_k, e_k\}_{k=1}^n$ and $\Y_n\coloneqq \spn\bigl(\Y\cup\{e_1, \dots ,e_n\}\bigr)$. Notice that, for each $n\in \N$, $\Z_n$ is a finite dimensional subspace of $\Y_n$ and $\X= \Y_n\oplus_\infty \spn\{e_k\}_{k=n}^\infty$ is isometric to $\Y_n\oplus_\infty c_{00}$. We inductively define families $\B_n$ ($n\in\N$) of balls in $\X$, satisfying for each $n\in\N$ the following conditions:
\begin{enumerate}
    \item[($\mathrm P^1_n$)] $\B_n$ is star-finite and non-overlapping;
    \item[($\mathrm P^2_n$)] for each $B\in \B_n$, $\pi_{\Y_n}(c(B))\in \Z_n$ (equivalently, $\pi_\Y(c(B))\in \spn \{y_1,\dots, y_n\}$);
    \item[($\mathrm P^3_n$)] $\Z_n\subseteq \bigcup(\B_1\cup \dots\cup \B_n)$;
    \item[($\mathrm P^4_n$)] $\bigcup\B_n$ is disjoint from $\bigcup\B_k$ for all $k< n$;
    \item[($\mathrm P^5_n$)] the set $\bigcup(\B_1\cup \dots\cup \B_n)$ is closed in $\X$.
\end{enumerate}

In fact, an application of \Cref{lemma: induction} to the normed space $\Y_1$, the subspace $\Z_1$, and the family $\C=\emptyset$ gives us a family $\B_1$ of balls in $\X$ satisfying conditions ($\mathrm P^1_1$)--($\mathrm P^5_1$). Now, take $n\geq 2$ and assume that, for each $k\leq n-1$, we have already defined $\B_k$ such that properties ($\mathrm P^1_k$)--($\mathrm P^5_k$) hold. Define $\C=\B_1\cup\dots\cup \B_{n-1}$ and notice that, by ($\mathrm P^5_{n-1}$), the set $\bigcup\C$ is closed in $\X$. Also, if $B\in \C$, take $k\leq n-1$ such that $B\in \B_k$; then, by ($\mathrm P^2_k$) we have
\[ \pi_\Y(c(B))\in \spn \{y_1,\dots, y_k\}\subseteq \spn \{y_1,\dots, y_n\}. \]
Thus, the family $\C$ satisfies the assumptions of \Cref{lemma: induction} (applied to the subspace $\Z_n$ of $\Y_n$), hence the lemma yields us a family $\B_n$ that satisfies \eqref{i: non-overlap}--\eqref{i: U closed}. These conditions directly imply that ($\mathrm P^1_n$)--($\mathrm P^5_n$) hold, and conclude the induction step.
  
Finally, let $\B=\bigcup_{n\in\N}\B_n$. By property ($\mathrm P^3_n$), we immediately get that $\B$ is a covering. Further, ($\mathrm P^4_n$) implies that the sets $\bigcup \B_n$ ($n\in\N$) are pairwise disjoint; hence by property ($\mathrm P^1_n$) we obtain that $\B$ is a star-finite tiling of $\X$.
\end{proof}

\begin{remark} In particular, the normed space $c_{00}$ admits a star-finite tiling by balls. On the other hand, by \Cref{thm: fd reduction}\eqref{star-n-finite} it does not have any star-\textit{n}-finite tiling by balls, for any $n\in\N$ (notice that $\Y_n=\spn \{e_1,\dots, e_n\}$ has the (GIB) property in $c_{00}$). Therefore, even for tilings, star-finiteness is weaker than being star-$n$-finite for some $n\in\ \N$ (compare with \Cref{rmk: star not star-n}).
\end{remark}


\end{document}